\newcommand{\fun}{\mathbb{F}_1}
\renewcommand{\Re}{\mathop{\mathrm{Re}}}
\newcommand{\sts}[1]{\mathcal{O}_{#1}}
\newcommand{\Abel}{\mathfrak{Ab}}
\newcommand{\Mo}{\mathfrak{M}_0}
\newcommand{\Alg}{\mathfrak{Alg}}
\newcommand{\bbA}{\mathbb{A}}
\newcommand{\bbF}{\mathbb{F}}
\newcommand{\bbN}{\mathbb{N}}
\newcommand{\bbZ}{\mathbb{Z}}
\newcommand{\bbQ}{\mathbb{Q}}
\newcommand{\bbR}{\mathbb{R}}
\newcommand{\bbC}{\mathbb{C}}
\newcommand{\bbP}{\mathbb{P}}
\newcommand{\Hom}{\mathop{\mathrm{Hom}}\nolimits}
\newcommand{\Spec}{\mathop{\mathrm{Spec}}}
\newcommand{\MSpec}{\mathop{\mathrm{spec}}}
\newcommand{\Kzeta}{\zeta^{\mathrm{K}}}
\newcommand{\CPoly}{\mathfrak{C}}
\newcommand{\FPoly}{\mathfrak{F}}
\newcommand{\PuiPoly}{\bbR[t^{1/\infty}]}
\newcommand{\primes}{\mathbb{P}}
\newcommand{\PrimePow}{\mathbb{P}^\mathbb{N}}
\newtheorem{theorem}{Theorem}[section]
\newtheorem{proposition}[theorem]{Proposition}
\newtheorem{lemma}[theorem]{Lemma}
\newtheorem{corollary}[theorem]{Corollary}
\theoremstyle{definition}
\newtheorem{definition}[theorem]{Definition}
\newtheorem{eg}[theorem]{Example}
\newtheorem{remark}[theorem]{Remark}
\title[ceiling/floor Puiseux polynomial]{Absolute zeta functions arising from ceiling and floor Puiseux polynomials}
\date{}
\author{Yoshinosuke Hirakawa}
\address[Yoshinosuke Hirakawa]{Graduate School of Sciences and Technology for Innovation \\ Yamaguchi University, 1677-1, Yoshida, Yamaguchi, Japan}
\email{yhirakawa@yamaguchi-u.ac.jp, hirakawa@keio.jp}
\thanks{This research is supported by JSPS KAKENHI Grant Numbers JP21K13779, JP22J10658 and JP22KJ2684.}
\author{Takuki Tomita}
\address[Takuki Tomita]{Department of Mathematics, Faculty of Science and Technology, Keio University, 3-14-1 Hiyoshi, Kouhoku-ku, Yokohama, 223-8522, Japan}
\email{takuki@keio.jp}
\subjclass[2020]{14G10 (Primary), 11M41, 11R59 (Secondary)}
\keywords{absolute zeta functions, monoid schemes, elliptic curves}
\begin{document}
\maketitle

\begin{abstract}
For the $\mathbb{Z}$-lift $X_\mathbb{Z}$ of a monoid scheme $X$ of finite type, Deitmar-Koyama-Kurokawa calculated its absolute zeta function by interpolating $\#X_\mathbb{Z}(\mathbb{F}_q)$ for all prime powers $q$ using the Fourier expansion. This absolute zeta function coincides with the absolute zeta function of a certain polynomial. In this article, we characterize the polynomial as a ceiling polynomial of the sequence $\left(\#X_\mathbb{Z}(\mathbb{F}_q)\right)_q$, which we introduce independently. Extending this idea, we introduce a certain pair of absolute zeta functions of a separated scheme $X$ of finite type over $\mathbb{Q}$ by means of a pair of Puiseux polynomials which estimate ``$\#X(\mathbb{F}_{p^m})$'' for sufficiently large $p$. We call them the ceiling and floor Puiseux polynomials of $X$. In particular, if $X$ is an elliptic curve, then our absolute zeta functions of $X$ do not depend on its isogeny class.
\end{abstract}

%
%
%
%
%
\section{Introduction}\label{section:Intro}
%
%
%
%
%

In number theory, it is traditionally important to study the solutions over $\bbZ$ of algebraic equations. One of the approaches to such a problem is to investigate the set $\mathcal{X}(\bbF_{p^m})$ of the $\bbF_{p^m}$-rational points of a scheme $\mathcal{X}$ of finite type over $\bbZ$ and unify information on $\mathcal{X}(\bbF_{p^m})$'s. In particular, the zeta function of $\mathcal{X}_{\bbF_p}:=\mathcal{X}\times\Spec\bbF_p$ defined by
\[Z(\mathcal{X}_{\bbF_p}, p^{-s}):=\exp\left(\sum_{m=1}^\infty\frac{\#\mathcal{X}(\bbF_{p^m})}{m}p^{-ms}\right)\]
has been studied as exemplified by the Weil Conjecture since the 20th century.

Soul\'e~\cite{soule2004} studied ``the limit of $Z(\mathcal{X}_{\bbF_p}, p^{-s})$ as $p\to 1$" when there exists a polynomial $f_{\mathcal{X}}(t)=\sum_{j=0}^R a_j t^j$ satisfying that $\#\mathcal{X}(\bbF_{p^m})=f_{\mathcal{X}}(p^m)$ for any prime number $p$ and $m\in\bbN$. More precisely, he found the fact that
\begin{equation}\label{eq: Soule-limit}
    \lim_{p\to 1}(p-1)^{f_{\mathcal{X}}(1)}\exp\left(\sum_{m=1}^\infty\frac{f_{\mathcal{X}}(p^m)}{m}p^{-ms}\right)=\prod_{j=0}^R (s-j)^{-a_j} \tag{S}
\end{equation}
and called it the \emph{absolute zeta function} of $\mathcal{X}$.
Later, Deitmar~\cites{Deitmar2005, Deitmar2006} introduced a \emph{monoid scheme} and realized the above rational function as an invariant of a monoid scheme.
After Deitmar's work, Connes and Consani generalized the above definition of absolute zeta functions as follows.

\begin{definition}[\cites{Connes-Consani_Compositio}]
Let $f\colon[1,\infty)\to\bbC$ be a function satisfying that $|f(t)|\leq Ct^d$ for some $C>0$ and $d>0$. Then, the \emph{absolute zeta function} of $f$ is defined by the limit
\[\zeta_f(s):=\lim_{p\to 1+}(p-1)^{f(1)}\exp\left(\sum_{m=1}^\infty\frac{f(p^m)}{m}p^{-ms}\right)\quad (\Re(s)>d)\]
when the right-hand side converges.
\end{definition}

\begin{remark}\label{rem: another_def_of_AZF}
Kurokawa~\cites{kurokawa2013dualities, Deitmar-Koyama-Kurokawa_2015} introduced another definition of the absolute zeta function for a nice function $f\colon (1,\infty)\to\bbC$ by
\[\Kzeta_f(s):=\exp\left(\left.\frac{\partial}{\partial w}Z_f(w,s)\right|_{w=0}\right),\]
where
\[Z_f(w,s):=\frac{1}{\Gamma(w)}\int_1^\infty f(t)t^{-s}(\log t)^{w-1}\frac{dt}{t}.\]
It is one of the advantages of this definition that we can regard Barnes' multiple gamma function as an absolute zeta function.
Moreover, this definition is consistent with Soul\'e's since $\Kzeta_f(s)=\zeta_f(s)$ for any Puiseux polynomial $f$.
\end{remark}

Let $X=(X, \mathcal{O}_{X})$ be a monoid scheme of finite type and $X_\bbZ$ be the $\bbZ$-lift of $X$ (see \cites{Deitmar2005, Deitmar2006}). Then, Connes and Consani showed that
\[\#X_\bbZ(\bbF_q)=\sum_{x\in X}(q-1)^{r_{x}}\prod_{j=1}^{l_x}\gcd(q-1, t_{x,j})\]
for any prime power $q$ (see Propositions \ref{prop: rational-pts} and \ref{prop: rational-points_monoid-scheme}), where the non-negative integers $r_{x}$, $l_x$ and the positive integers $t_{x,j}$ are taken so that $\sts{X,x}^\times\cong\bbZ^{r_{x}}\times\prod_{j=1}^{l_x}\bbZ/t_{x,j}\bbZ$ with $t_{x,j}\mid t_{x,j+1}$ for each $x\in X$.
By using the Fourier expansion of the periodic function $\gcd(q-1, t_{x,j})$ in $q$, Deitmar, Koyama and Kurokawa \cite{Deitmar-Koyama-Kurokawa_2015}*{pp.~61--63} interpolated $\#X_\bbZ(\bbF_q)$ to a certain continuous function $N_{X_\bbZ}$ on $[1,\infty)$ and then obtained the following result.

\begin{theorem}[\cite{Deitmar-Koyama-Kurokawa_2015}*{Theorem 2.1}]\label{thm: DKK-AZF}
For the above function $N_{X_\bbZ}$, it holds that
\[\zeta_{N_{X_\bbZ}}(s)=\prod_{k=0}^{R_X}(s-k)^{\sum_{x\in X}T_x(-1)^{r_{x}-k+1}\binom{r_{x}}{k}},\]
where $T_x:=\prod_{j=1}^{l_x}t_{x,j}$ and $R_X:=\max_{x\in X}r_x$.
Moreover, if $X_\bbZ$ is a smooth projective variety of relative dimension $d$, it holds that $N_{X_\bbZ}(1)=\chi_{\mathrm{top}}(X_\bbZ(\bbC))$ and $\zeta_{N_{X_\bbZ}}(d-s)=(-1)^{\chi_{\mathrm{top}}(X_\bbZ(\bbC))}\zeta_{N_{X_\bbZ}}(s)$, where $\chi_{\mathrm{top}}(X_\bbZ(\bbC))$ is the Euler characteristic of the complex manifold $X_\bbZ(\bbC)$.
\end{theorem}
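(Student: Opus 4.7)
My plan is to reduce the computation to an elementary manipulation of the ``ceiling polynomial'' $P(t):=\sum_{x\in X}T_x(t-1)^{r_x}$.

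First, I would start from the closed form $\#X_\bbZ(\bbF_q)=\sum_{x\in X}(q-1)^{r_x}\prod_{j=1}^{l_x}\gcd(q-1,t_{x,j})$ and use the identities $\gcd(n,t)=\sum_{d\mid t}\phi(d)[d\mid n]$ and $[d\mid n]=\frac{1}{d}\sum_{k=0}^{d-1}e^{2\pi ikn/d}$ (valid for integer $n$) to set up the Fourier-type interpolation following Deitmar-Koyama-Kurokawa. Expanding the product and extending continuously in $q$ then gives $N_{X_\bbZ}(t)$ on $[1,\infty)$ as a finite linear combination of building blocks of the shape $C_{x,\vec d,\vec k}(t-1)^{r_x}e^{2\pi i\alpha(t-1)}$, with $\alpha\in\bbQ$ and $C_{x,\vec d,\vec k}=\prod_j\phi(d_j)/d_j$.

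Second, using the linearity $\log\zeta_{f+g}=\log\zeta_f+\log\zeta_g$ and the scaling $\log\zeta_{cf}=c\log\zeta_f$ (both immediate from the definition), I reduce to two ingredients: (a) the phase-cancellation identity $\zeta_{(t-1)^re^{2\pi i\alpha(t-1)}}(s)=\zeta_{(t-1)^r}(s)$ for every $\alpha\in\bbQ$, and (b) the collapse
\[\sum_{\vec d,\vec k}\prod_{j=1}^{l_x}\frac{\phi(d_j)}{d_j}=\prod_{j=1}^{l_x}\Bigl(\sum_{d_j\mid t_{x,j}}d_j\cdot\frac{\phi(d_j)}{d_j}\Bigr)=\prod_{j=1}^{l_x}\sum_{d_j\mid t_{x,j}}\phi(d_j)=\prod_{j=1}^{l_x}t_{x,j}=T_x\]
by the classical identity $\sum_{d\mid t}\phi(d)=t$ (the $\vec k$-sum of size $\prod_jd_j$ cancels the denominators). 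I would establish (a) via Kurokawa's Mellin-transform formulation (Remark \ref{rem: another_def_of_AZF}): expanding $e^{2\pi i\alpha(t-1)}=\sum_n(2\pi i\alpha)^n(t-1)^n/n!$, applying $\Kzeta$ term by term (legitimate for Puiseux polynomials), and reorganizing the resulting double product so that the $\alpha$-dependent combinatorial sums telescope into exponents that are $\alpha$-independent. Combined with the elementary computation $\zeta_{(t-1)^r}(s)=\prod_{k=0}^r(s-k)^{(-1)^{r-k+1}\binom{r}{k}}$ (from expanding $(t-1)^r=\sum_k\binom{r}{k}(-1)^{r-k}t^k$ and using $\zeta_{t^k}(s)=(s-k)^{-1}$), this yields the claimed product formula.

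For the ``Moreover'' part, $N_{X_\bbZ}(1)=\sum_{x:r_x=0}T_x$ drops out of the interpolation since $(1-1)^{r_x}=0$ for $r_x>0$; the equality with $\chi_\mathrm{top}(X_\bbZ(\bbC))$ follows from a cellular decomposition of $X_\bbZ(\bbC)$ indexed by the points of $X$, where each $x$ contributes a stratum isomorphic to $(\bbC^\times)^{r_x}\times(\text{finite set of size }T_x)$, and only the strata with $r_x=0$ have nonzero Euler characteristic. The functional equation reduces to the exponent symmetry $a_k=a_{d-k}$ for $a_k=\sum_xT_x(-1)^{r_x-k+1}\binom{r_x}{k}$, which I would derive from a Poincar\'e-duality involution $x\leftrightarrow x^\vee$ on a smooth projective monoid scheme satisfying $(r_{x^\vee},T_{x^\vee})=(d-r_x,T_x)$; the sign $(-1)^{\chi_\mathrm{top}}$ then comes from $\sum_ka_k=-\chi_\mathrm{top}$, which follows by summing the identity $\sum_k(-1)^{r_x-k+1}\binom{r_x}{k}=-(1-1)^{r_x}$ across $x$. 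The main obstacle is the phase-cancellation identity (a): although morally the integer-valued data $\#X_\bbZ(\bbF_q)$ must produce a $\zeta$-function with rational exponents, the naive double limit in the definition of $\zeta_f$ involves oscillating infinite series, and making the term-by-term manipulation rigorous via the Mellin transform requires careful convergence arguments.
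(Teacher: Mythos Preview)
This theorem is not proved in the present paper: it is quoted from \cite{Deitmar-Koyama-Kurokawa_2015} and used as input. The paper's own contribution (Theorem~\ref{thm: Noet-scheme_ceiling-floor}) computes $\zeta_{\CPoly_{\mathcal{X}}}$ for the ceiling polynomial $\CPoly_{\mathcal{X}}(t)=\sum_{x\in X}T_x(t-1)^{r_x}$ directly from Soul\'e's formula~(\ref{eq: Soule-limit}), and then observes that the outcome matches the DKK formula; the identification $\zeta_{N_{X_\bbZ}}=\zeta_{\CPoly_{X_\bbZ}}$ is obtained only by \emph{comparing} with the quoted Theorem~\ref{thm: DKK-AZF}, not by an independent argument. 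So there is no ``paper's own proof'' to benchmark against; your proposal is an attempt to reconstruct the external DKK argument.

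On that attempt: the combinatorial collapse (b) is fine, but your plan for the phase-cancellation identity (a) does not work. Expanding $e^{2\pi i\alpha(t-1)}$ as a power series and applying $\Kzeta$ term by term is illegitimate, because the Mellin integral for $(t-1)^{r+n}$ needs $\Re(s)>r+n$, so there is no common half-plane of convergence; and Remark~\ref{rem: another_def_of_AZF} only asserts $\Kzeta_f=\zeta_f$ and linearity for (finite) Puiseux polynomials, not for transcendental $f$. Worse, the formal termwise computation does \emph{not} telescope as you claim: already the coefficient of $\log s$ that your recipe produces is $(-1)^{r+1}\sum_{n\geq 0}(-2\pi i\alpha)^n/n!=(-1)^{r+1}e^{-2\pi i\alpha}$, which equals the target $(-1)^{r+1}$ only when $\alpha\in\bbZ$. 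So the per-phase identity (a) is not the right organizing principle; what the DKK computation actually requires is a direct analytic treatment of the full oscillatory interpolation (the ``relatively complicated calculations'' the introduction alludes to), not a reduction to individual exponentials. For the ``Moreover'' clause, your Poincar\'e-duality bijection $x\leftrightarrow x^\vee$ with $(r_{x^\vee},T_{x^\vee})=(d-r_x,T_x)$ is asserted rather than constructed, and the Euler-characteristic equality $N_{X_\bbZ}(1)=\chi_{\mathrm{top}}(X_\bbZ(\bbC))$ is deduced by DKK from the Weil conjecture (as the paper recalls in the remarks of \S\ref{subsection: elliptic_curve}), not from a cellular decomposition.
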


\begin{remark}
In \cite{Deitmar-Koyama-Kurokawa_2015}, Deitmar, Koyama and Kurokawa took $t_{x,j}$'s as prime powers instead of the above integers satisfying $t_{x,j}\mid t_{x,j+1}$.
\end{remark}

Despite this simple result, the proof of Theorem \ref{thm: DKK-AZF} involves relatively complicated calculations. In fact, comparing with the equation (\ref{eq: Soule-limit}) and Theorem \ref{thm: DKK-AZF}, we see that the absolute zeta function $\zeta_{N_{X_\bbZ}}(s)$ of $N_{X_\bbZ}$ coincides with the absolute zeta function $\zeta_{\CPoly_{X_\bbZ}}(s)$ of the polynomial
\[\CPoly_{X_\bbZ}(t)=\sum_{x\in X}T_x(t-1)^{r_x}.\]
This polynomial $\CPoly_{X_\bbZ}$ is characterized as the \emph{ceiling polynomial} of $X_\bbZ$ (cf. Lemma \ref{lem: existence_ceiling-floor} and Theorem \ref{thm: Noet-scheme_ceiling-floor}), which is defined as the unique polynomial in $\bbR[t]$ satisfying the following conditions:
\begin{enumerate}
    \item The inequality $\CPoly_{X_\bbZ}(q) \geq \#X_\bbZ(\bbF_{q})$ holds for every prime power $q$.
    \item There exist infinitely many prime powers $q$ such that $\CPoly_{X_\bbZ}(q) = \#X_\bbZ(\bbF_{q})$.
\end{enumerate}
Thus, we have a more simple way to obtain the above absolute zeta function $\zeta_{N_{X_\bbZ}}(s)$, not using the periodicity of $\gcd(q-1, t_{x,j})$. This simple observation is notable for us to extend Soul\'e's idea to a more general scheme of finite type over $\bbZ$ for which we do not have any formula like Connes-Consani's formula of $\#X_\bbZ(\bbF_q)$.

Similarly, by replacing $\geq$ with $\leq$ in the first condition, we can recover the polynomial
\[\FPoly_{X_\bbZ}(t)=\sum_{x\in X}(t-1)^{r_x},\]
introduced by Deitmar \cite{Deitmar2006}*{Theorem 1}. We call it the \emph{floor polynomial} of $X_\bbZ$.

The above conditions satisfied by the ceiling polynomial suggest that it is not necessary to interpolate the whole sequence $(\#X_\bbZ(\bbF_q))_q$ for the definition of an absolute zeta function of $X_\bbZ$, at least in view of the result of Deitmar-Koyama-Kurokawa \cite{Deitmar-Koyama-Kurokawa_2015}.
Therefore, it is more natural to start from a general (separated) scheme of finite type over $\bbQ$ instead of the $\bbZ$-lift of a monoid scheme of finite type.
Moreover, since the polynomial condition is too strict for most schemes of finite type over $\bbZ[S^{-1}]$, we generalize the ceiling polynomial by means of Puiseux polynomial.
For example, a desired Puiseux polynomial exists uniquely for every elliptic curve $E$ over $\bbQ$ as follows; this fact leads us to a provisional definition of the absolute zeta function of $E$.

\begin{theorem}[Corollary \ref{cor: CPoly-AZF_elliptic-curve}]\label{thm: main-2}
Let $E$ be an elliptic curve defined over $\bbQ$. Then, the Puiseux polynomial $\CPoly_E(t):=t+2t^{1/2}+1$ is characterized as the unique element in $\bbR[t^{1/\infty}]=\bigcup_{n\in\bbN}\bbR[t^{1/n}]$ satisfying the following condition: for any separated scheme $\mathcal{E}$ of finite type over $\bbZ$ satisfying that $\mathcal{E}_\bbQ\cong E$, there exists a finite set $S_{\mathcal{E}}$ of prime numbers such that for any finite set $S$ of prime numbers containing $S_{\mathcal{E}}$, the Puiseux polynomial $\CPoly_E$ satisfies the following conditions:
\begin{enumerate}
    \item The inequality $\CPoly_E(p^m) \geq \#\mathcal{E}(\bbF_{p^m})$ holds for every prime power $p^m$, where $p\not\in S$.
    \item There exist infinitely many prime powers $p^m$ such that $p\not\in S$ and the equality $\lfloor \CPoly_E(p^m)\rfloor = \#\mathcal{E}(\bbF_{p^m})$ holds.
    \item $\CPoly_E(1) \in \bbZ$.
\end{enumerate}
Moreover, the absolute zeta function of $\CPoly_E$ is
\[\zeta_{\CPoly_E}(s)=\frac{1}{s\left(s-\frac{1}{2}\right)^2(s-1)}.\]
\end{theorem}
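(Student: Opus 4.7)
The strategy is to verify the three conditions for $\CPoly_E$, deduce uniqueness by matching asymptotics term-by-term, and finally read off $\zeta_{\CPoly_E}$ from the Puiseux-polynomial analog of Soul\'e's formula (\ref{eq: Soule-limit}). Condition (3) is immediate: $\CPoly_E(1) = 4 \in \bbZ$. For (1), I would take $S_{\mathcal{E}}$ to contain the primes of bad reduction of $\mathcal{E}$; then for $p \notin S$ the Hasse--Weil bound $|\#\mathcal{E}(\bbF_{p^m}) - (p^m+1)| \le 2p^{m/2}$ yields $\#\mathcal{E}(\bbF_{p^m}) \le p^m + 2p^{m/2} + 1 = \CPoly_E(p^m)$. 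For (2), I would invoke Elkies's theorem that every elliptic curve over $\bbQ$ admits infinitely many supersingular primes. For any such prime $p \ge 5$ outside $S$, one has $a_p = 0$, hence $a_{p^2} = a_p^2 - 2p = -2p$, and therefore
\[\#\mathcal{E}(\bbF_{p^2}) = p^2 + 2p + 1 = \CPoly_E(p^2) \in \bbZ,\]
so $\lfloor \CPoly_E(p^2) \rfloor = \#\mathcal{E}(\bbF_{p^2})$ for infinitely many $p$.

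\textbf{Uniqueness.} Let $g \in \PuiPoly$ satisfy (1)--(3), with Puiseux expansion $g(t) = \sum_i c_i t^{e_i}$ ($c_i \ne 0$, $e_1 > e_2 > \cdots \ge 0$). I would extract the terms successively. First, (1) at $m=1$ together with Hasse's lower bound $\#\mathcal{E}(\bbF_p) \ge p + 1 - 2\sqrt{p}$ forces $e_1 \ge 1$, while (2) combined with Hasse's upper bound forces $e_1 \le 1$; comparing leading coefficients then gives $c_1 = 1$. Next, setting $h(t) := g(t) - t$ and applying (1) at $m = 2$ to supersingular primes $p \ge 5$, one obtains $h(p^2) \ge 2p + 1$, which forces $e_2 \ge 1/2$ and $c_2 \ge 2$; applying (2) with Hasse gives $h(p^m) \le 2p^{m/2} + 2$ on an infinite set, which forces $e_2 \le 1/2$ and $c_2 \le 2$. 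Finally, put $r(t) := g(t) - t - 2 t^{1/2}$, so that all exponents of $r$ lie in $[0, 1/2)$. The same two inequality streams give $r(p^2) \ge 1$ at large supersingular $p$ and $r(p^m) < 2$ along an infinite subsequence; any strictly positive exponent in $r$ would send it to $\pm \infty$ along one of these sequences, contradicting one bound or the other. Hence $r$ is a constant $c_3 \in [1, 2)$, and (3) forces $c_3 \in \bbZ$, so $c_3 = 1$ and $g = \CPoly_E$.

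\textbf{Absolute zeta function and main obstacle.} Applying the formula $\zeta_f(s) = \prod_j (s - b_j)^{-a_j}$ (the Puiseux-polynomial analog of (\ref{eq: Soule-limit}), valid since $\Kzeta_f = \zeta_f$ for any Puiseux polynomial) to $\CPoly_E(t) = t + 2 t^{1/2} + 1$ yields
\[\zeta_{\CPoly_E}(s) = \frac{1}{s \left(s - \tfrac{1}{2}\right)^{2} (s - 1)}.\]
The only substantial external input is Elkies's theorem on supersingular primes; without it, verifying (2) would require much more delicate equidistribution information for Frobenius traces. The most subtle step in the uniqueness argument is pinning down the constant term: all three of (1), (2), and the integrality in (3) must be used simultaneously to rule out other values of $c_3 \in [1,2)$.
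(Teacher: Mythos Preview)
Your argument is correct and reaches the same conclusion, but your uniqueness proof takes a different route from the paper's.  The paper isolates uniqueness as a general lemma (Lemma~\ref{lem: existence_ceiling-floor-Puiseux}): given \emph{any} two Puiseux polynomials $f,g$ satisfying (1)--(3) for the same integer sequence, the difference $f-g$ is a polynomial in $t^{1/n}$ for some $n$, hence either eventually $\geq 1$, eventually $\leq -1$, or constant in $(-1,1)$; the first two options are ruled out by playing condition (1) for one against condition (2) for the other, and in the third option condition (3) pins the constant to $0$.  This trichotomy replaces your entire term-by-term extraction in a single stroke and applies uniformly to any sequence, which is what lets the paper package the curve case as Proposition~\ref{prop: ceiling-floor_curve}.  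Your extraction is more explicit but is tailored to the Hasse bound, and the intermediate claims (``forces $e_2\geq 1/2$ and $c_2\geq 2$'') are literally correct only after the two directions are combined; the logic is fine, but the phrasing obscures that you are really sandwiching $h(t)$ between $2\sqrt{t}$ from below (via Elkies) and from above (via (2) and Hasse).

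Two small points.  First, when you write ``take $S_{\mathcal{E}}$ to contain the primes of bad reduction of $\mathcal{E}$,'' note that $\mathcal{E}$ is an \emph{arbitrary} separated $\bbZ$-scheme with generic fibre $E$, not a Weierstrass model; what you need is a finite set outside which $\#\mathcal{E}(\bbF_q)$ agrees with the point count of a smooth proper model, which the paper supplies via Corollary~\ref{cor: single_model_is_enough} (the $\ell$-adic comparison).  Second, your use of Elkies in both the existence of (2) and in the uniqueness step is exactly what the paper does (Lemma~\ref{lem: supersingular_equiv} and Theorem~\ref{thm: Elkies}); you are right that this is the only nontrivial external input.
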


We call $\CPoly_E$ the \emph{ceiling Puiseux polynomial} of $E$.
A drawback of $\CPoly_E$ is that the special value $\CPoly_E(1)$ does not coincide with the Euler characteristic of the complex torus $E(\bbC)$. This is not consistent with the well-known philosophy (cf.\ \cite{soule2004}*{The\'or\`eme 2}, \cite{kurokawa2005}*{Remark 2}, \cite{Deitmar2006}*{p.~141}) that the value at $1$ of the original function $f$ of the absolute zeta function $\zeta_f$ associated with a scheme coincides with its Euler characteristic.
Indeed, if $X$ is a monoid scheme of finite type such that $T_x=1$ for each $x\in X$ and $X_\bbZ$ is a smooth projective variety, then it holds that $N_{X_\bbZ}(1)=\CPoly_{X_\bbZ}(1)=\FPoly_{X_\bbZ}(1)=\chi_{\mathrm{top}}(X_\bbZ(\bbC))$, where $\chi_{\mathrm{top}}(X_\bbZ(\bbC))$ is the Euler characteristic of $X_\bbZ(\bbC)$.

On the other hand, by replacing $\geq$ in (1) (resp.\ $\lfloor \CPoly_E(p^m)\rfloor = \#\mathcal{E}(\bbF_{p^m})$ in (2)) in Theorem \ref{thm: main-2} with $\leq$ (resp.\ $\lceil \CPoly_E(p^m)\rceil = \#\mathcal{E}(\bbF_{p^m})$), we can naturally define the \emph{floor Puiseux polynomial} of $E$ and determine it as follows.

\begin{theorem}[Corollary \ref{cor: CPoly-AZF_elliptic-curve}]\label{thm: main-2_floor}
Let $E$ be an elliptic curve defined over $\bbQ$. Then, the floor Puiseux polynomial $\FPoly_{E}(t)$ of $E$ coincides with $t-2t^{1/2}+1$ and its absolute zeta function is
\[\zeta_{\FPoly_E}(s)=\frac{\left(s-\frac{1}{2}\right)^2}{s(s-1)}.\]
\end{theorem}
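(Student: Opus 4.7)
The plan is to mirror the argument of Theorem~\ref{thm: main-2} (i.e.\ the ceiling half of Corollary~\ref{cor: CPoly-AZF_elliptic-curve}) with the Hasse--Weil inequality reversed. Fix any separated model $\mathcal{E}$ of finite type over $\bbZ$ with $\mathcal{E}_\bbQ\cong E$, and let $S_\mathcal{E}$ contain the primes of bad reduction. For $p\notin S_\mathcal{E}$, with Frobenius eigenvalues $\alpha_p,\bar\alpha_p$ satisfying $|\alpha_p|=p^{1/2}$, one has
\[\#\mathcal{E}(\bbF_{p^m})=p^m+1-(\alpha_p^m+\bar\alpha_p^m)\geq p^m-2p^{m/2}+1=\FPoly_E(p^m),\]
which is exactly condition~(1); condition~(3) is immediate from $\FPoly_E(1)=1-2+1=0$.

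Condition~(2) is the one point of real content; for it I would invoke Elkies's theorem that every elliptic curve over $\bbQ$ admits infinitely many supersingular primes. For such a prime $p\notin S_\mathcal{E}$ (with $p\geq 5$ say) we have $a_p=\alpha_p+\bar\alpha_p=0$, hence $\alpha_p^2=-p$ and $\alpha_p^4+\bar\alpha_p^4=2p^2$, giving
\[\#\mathcal{E}(\bbF_{p^4})=p^4-2p^2+1=(p^2-1)^2=\FPoly_E(p^4)\in\bbZ,\]
so $\lceil\FPoly_E(p^4)\rceil=\#\mathcal{E}(\bbF_{p^4})$. The infinitely many such primes $p$ supply infinitely many prime powers $p^4$ verifying~(2). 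This is the exact symmetric analogue of the identity $\#\mathcal{E}(\bbF_{p^2})=(p+1)^2=\CPoly_E(p^2)$ that drives the ceiling case.

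For uniqueness, suppose $g\in\PuiPoly$ also satisfies (1)--(3). Combining condition~(2) for $\FPoly_E$ with condition~(1) for $g$ gives
\[g(p^m)-\FPoly_E(p^m)\leq\lceil\FPoly_E(p^m)\rceil-\FPoly_E(p^m)<1\]
on an infinite set of prime powers tending to infinity; the symmetric combination yields the reverse inequality. Inductively on the highest remaining exponent, every positive-exponent term of $g-\FPoly_E$ must vanish (else the Puiseux polynomial would grow, contradicting the bound $<1$), and the surviving constant term is then forced to $0$ by condition~(3) together with the analogous bound $|g(1)-\FPoly_E(1)|<1$.

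Finally, the absolute zeta function is a direct substitution into the product formula $\zeta_f(s)=\prod_j(s-r_j)^{-a_j}$ for a Puiseux polynomial $f(t)=\sum_j a_jt^{r_j}$ (valid by Remark~\ref{rem: another_def_of_AZF}): with $(r_j,a_j)=(1,1),(1/2,-2),(0,1)$ one reads off
\[\zeta_{\FPoly_E}(s)=(s-1)^{-1}(s-1/2)^{2}\,s^{-1}=\frac{(s-1/2)^2}{s(s-1)}.\]
The one substantive input is Elkies's supersingular-primes theorem used in verifying~(2); the remainder is Hasse's inequality, an elementary leading-term argument for Puiseux polynomials, and a substitution into the product formula.
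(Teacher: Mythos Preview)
Your proof is correct and follows essentially the same route as the paper: the Hasse--Weil lower bound gives condition~(1), Elkies' theorem together with the computation $\#\mathcal{E}(\bbF_{p^4})=p^4-2p^2+1$ at supersingular primes gives condition~(2), and the absolute zeta function is read off from the product formula. The paper packages these steps through the general genus-$g$ statement (Proposition~\ref{prop: ceiling-floor_curve}) and the supersingular characterization (Lemma~\ref{lem: supersingular_equiv}), and it appeals to Lemma~\ref{lem: existence_ceiling-floor-Puiseux} for uniqueness rather than reproving it inline as you do, but the substance is the same; one minor point you leave implicit is that a single model $\mathcal{E}$ suffices (Corollary~\ref{cor: single_model_is_enough}) and that enlarging $S$ only discards finitely many of the supersingular primes, so conditions~(1)--(3) persist for every finite $S\supset S_{\mathcal{E}}$.
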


\begin{remark}
According to Theorems \ref{thm: main-2} and \ref{thm: main-2_floor}, it holds that
\[\zeta_{\CPoly_E}(s)=\left(\frac{1}{s\left(s-\frac{1}{2}\right)}\right)^{\otimes 2}\quad\text{and}\quad \zeta_{\FPoly_E}(s)=\left(\frac{s}{s-\frac{1}{2}}\right)^{\otimes 2},\]
where $\otimes$ denotes the tensor product that we replace $m(\rho_1,\ldots,\rho_r)$ to $-m(\rho_1,\ldots,\rho_r)$ in the definition of the Kurokawa tensor product in \cite{kurokawa1992multiple}*{p.~219}. These are compatible with the factorizations $\CPoly_E(t)=(t^{1/2}+1)^2$ and $\FPoly_{E}(t)=(t^{1/2}-1)^2$.
\end{remark}

Here, note that the special value $\FPoly_{E}(1)$ coincides with the Euler characteristic of $E(\bbC)$, which is consistent with the above philosophy.
In this view, it is fair to say that $\zeta_{\FPoly_E}$ is better than $\zeta_{\CPoly_E}$.

The organization of this article is as follows. In \S\ref{section: DKK}, we introduce ceiling polynomials and give another interpretation of \cite{Deitmar-Koyama-Kurokawa_2015}*{Theorem 2.1}. Then, we give some examples of ceiling (resp.\ floor) polynomials of specific schemes over $\bbZ[S^{-1}]$, where $S$ is a finite subset of prime numbers.
In \S\ref{section: Q-scheme}, we extend ceiling (resp.\ floor) polynomials to ceiling (resp.\ floor) Puiseux polynomials and determine the ceiling (resp.\ floor) Puiseux polynomial of an elliptic curve defined over $\bbQ$, which leads to a pair of provisional definitions of its absolute zeta function mentioned above.

\subsubsection*{Notation}
We denote the set of positive integers, non-negative integers and prime numbers by $\bbN$, $\bbN_0$ and $\primes$, respectively. We put $\PrimePow_S:=\{p^m\mid p\in\primes\setminus S,\ m\in\bbN\}$ for a subset $S$ of $\primes$ and abbreviate $\PrimePow_{\emptyset}$ to $\PrimePow$. Through this article, we write $\mathcal{P}$ as an infinite subset of $\bbN$ such as $\bbN$, $\primes$ and $\PrimePow_S$, where $S$ is finite.
For a commutative ring $R$, an $R$-algebra $A$ and a scheme $\mathcal{X}$ over $R$, the base change $\mathcal{X}\times_{\Spec R}\Spec A$ is denoted by $\mathcal{X}_A$.

%
%
%
%
%
\section{Ceiling/Floor polynomials}\label{section: DKK}
%
%
%
%
%

In this section, we review basic facts about monoid schemes and introduce the ceiling and floor polynomials of a scheme of finite type over $\bbZ[S^{-1}]$, where $S$ is a finite subset of $\primes$. After that, we characterize the polynomial $\CPoly_{X_\bbZ}$ in \S\ref{section:Intro} as the ceiling polynomial of $X_\bbZ$.

In this article, we refer to a \emph{monoid} as a commutative multiplicative monoid with the unit element $1$ and the absorbing element $0$ which maps any element to 0 by multiplication. We denote the category of monoids, abelian groups and commutative $R$-algebras with the unit element by $\Mo$, $\Abel$ and $\Alg_R$, respectively, where $R$ is a commutative ring.

\subsection{Monoid schemes}

A \emph{monoid scheme} is a topological space together with a sheaf of monoids, which is constructed by gluing spectra of monoids just like a scheme (see \cite{Deitmar2005}, where monoid schemes are called schemes over $\fun$). 

We denote the spectrum of a monoid $M$ as $\MSpec M$.\footnote{In this article, we use ``$\MSpec$'' for the spectrum of a monoid to distinguish it from ``$\Spec$'', the spectrum of a commutative ring.} Let $X$ be a monoid scheme with an affine covering $X=\bigcup_{i\in I}\MSpec M_i$. 
We say $X$ to be \emph{of finite type} if it has a covering by finitely many affine monoid schemes $\MSpec M_i$, where each $M_i$ is finitely generated \cite{Deitmar2006}.
In addition, we define $X(M):=\Hom(\MSpec M, X)$ for each $M\in\Mo$.

Let $R$ be a commutative ring. Through the base change functor $M\mapsto M\otimes_{\fun}R:=R[M]$ from $\Mo$ to $\Alg_R$, we obtain the scheme $X_R:=\bigcup_{i\in I}\Spec(M_i\otimes_{\fun}R)$ over $R$ and call $X_R$ as the \emph{$R$-lift} of $X$. Here, the isomorphism class of $X_R$ does not depend on the choices of affine coverings of $X$ \cite{Deitmar2005}.
Note that $X$ is of finite type if and only if the $\bbZ$-lift $X_\bbZ$ is of finite type over $\bbZ$ \cite{Deitmar2006}*{Lemma 2}.

Let $\fun[\cdot]\colon\Abel\to\Mo$ be the covariant functor which send a multiplicative abelian group $G$ to a monoid $G\cup\{0\}$. We put $\bbF_{1^n}:=\fun[C_n]$, where $C_n$ is a cyclic group of order $n\in\bbN$. In particular, we abbreviate $\bbF_{1^1}$ to $\fun$.

In \cite{Deitmar2006}, Deitmar mentioned the following property of monoid schemes.

\begin{proposition}[\cite{Deitmar2006}*{p.~143}]\label{prop: rational-pts}
Let $X$ be a monoid scheme of finite type. Then, it holds that \[\#X_\bbZ(\bbF_q)=\#X(\bbF_{1^{q-1}})\] for any $q\in\PrimePow$.
In particular, the underlying set of $X$ is finite, i.e., $\#X=\#X(\fun)=\#X_\bbZ(\bbF_2)<\infty$.
\end{proposition}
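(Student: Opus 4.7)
My plan is to reduce to the affine case via the adjunction between the base-change functor $\bbZ[\cdot]\colon\Mo\to\Alg_\bbZ$ and the multiplicative-monoid functor. For any finitely generated monoid $M$ and any commutative ring $R$, the universal property of the monoid algebra supplies a natural bijection
\[
\Hom_{\Alg_\bbZ}(\bbZ[M], R)\;\cong\;\Hom_{\Mo}(M, R),
\]
where on the right $R$ is regarded as a monoid under multiplication with absorbing element $0$. Specializing to $R=\bbF_q$, the multiplicative monoid is $\bbF_q^\times\cup\{0\}\cong C_{q-1}\cup\{0\}=\bbF_{1^{q-1}}$, so for $X=\MSpec M$ affine this immediately yields $X_\bbZ(\bbF_q)\cong X(\bbF_{1^{q-1}})$.

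To globalize, I would take a finite affine cover $X=\bigcup_i\MSpec M_i$ coming from the finite-type hypothesis. Both $\Spec\bbF_q$ and $\MSpec\bbF_{1^{q-1}}$ are one-point spaces---the latter because every nonzero element of $\bbF_{1^{q-1}}=C_{q-1}\cup\{0\}$ is a unit, so $\{0\}$ is the only proper ideal---so any morphism into $X_\bbZ$ (respectively into $X$) factors through some affine open. Since the affine bijection above is natural in $M$, it is compatible with the monoid localizations describing overlaps $\MSpec M_i\cap\MSpec M_j$, and the affine bijections assemble to a global bijection $X_\bbZ(\bbF_q)\cong X(\bbF_{1^{q-1}})$.

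For the ``in particular'' statement, I would set $q=2$, so that $\bbF_{1^{q-1}}=\fun=\{0,1\}$, and observe that a homomorphism $\phi\colon M\to\fun$ is uniquely determined by $\phi^{-1}(0)$, which is necessarily a prime ideal of $M$; conversely every prime ideal determines such a $\phi$. Hence $\Hom_\Mo(M,\fun)$ is in canonical bijection with the set $|\MSpec M|$, and gluing identifies $X(\fun)$ with the underlying set of $X$. Finiteness then follows: each $M_i$ is finitely generated, and by the prime condition a prime ideal $\mathfrak{p}\subset M_i$ is determined by its intersection with any finite generating set (a monomial $m_1^{a_1}\cdots m_n^{a_n}$ lies in $\mathfrak{p}$ iff some $m_j$ with $a_j>0$ does), bounding $\#\MSpec M_i$ by $2^n$.

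The only delicate step I anticipate is the overlap check in the globalization, but it is essentially automatic: the functoriality of $M\mapsto\bbZ[M]$ together with the identity $\bbZ[M][f^{-1}]=\bbZ[M[f^{-1}]]$ for $f\in M$ shows that localization commutes with the base change, so the affine bijections agree on every distinguished affine and glue coherently.
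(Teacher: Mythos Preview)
Your argument is correct and is essentially the standard proof of this fact (due to Deitmar): the universal property of the monoid algebra gives the affine bijection $\Hom_{\Alg_\bbZ}(\bbZ[M],\bbF_q)\cong\Hom_{\Mo}(M,\bbF_{1^{q-1}})$, and gluing over a finite affine cover globalizes it; the identification $X(\fun)\cong |X|$ via $\phi\mapsto\phi^{-1}(0)$ and the bound $\#|\MSpec M_i|\leq 2^{\#\text{generators}}$ handle the finiteness claim. Note, however, that the paper does not supply its own proof here---the proposition is merely quoted from \cite{Deitmar2006}*{p.~143}---so there is nothing to compare against beyond observing that your write-up matches Deitmar's original reasoning.
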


Connes and Consani explicitly described the right-hand side of Proposition \ref{prop: rational-pts}. Before stating their formula, we introduce some notations used hereafter.

\begin{definition}\label{def: integers}
Let $X=(X, \mathcal{O}_{X})$ be a monoid scheme of finite type. For each $x\in X$, we define $r_x, l_x\in\bbN_0$ and $t_{x,j}\in\bbN$ as the integers satisfying
\[\sts{X,x}^\times\cong\bbZ^{r_{x}}\times\prod_{j=1}^{l_x}\bbZ/t_{x,j}\bbZ\quad\text{with}\quad t_{x,j}\mid t_{x,j+1}\]
and put $T_x:=\#(\sts{X,x}^\times)_{\mathrm{tors}}$.
Here, $\sts{X,x}^\times$ denotes the group of invertible elements of the monoid $\sts{X,x}$ and $(\sts{X,x}^\times)_{\mathrm{tors}}$ denotes its torsion subgroup.
In addition, we put $R_X:=\max_{x\in X}r_x$ and $T_X:=\prod_{x\in X}T_x$.
\end{definition}

\begin{proposition}[\cite{Connes-Consani_Compositio}*{Proposition 3.22}]\label{prop: rational-points_monoid-scheme}
Let $X$ be a monoid scheme of finite type. Then, it holds that
\[\#X(\bbF_{1^{n}})=\sum_{x\in X}n^{r_{x}}\prod_{j=1}^{l_x}\gcd(n, t_{x,j})\]
for any $n\in\bbN$.
\end{proposition}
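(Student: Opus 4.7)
The plan is to stratify $X(\bbF_{1^n})=\Hom(\MSpec\bbF_{1^n},X)$ by the image of the unique point of $\MSpec\bbF_{1^n}$, and then to count, at each stalk, the local monoid homomorphisms into $\bbF_{1^n}$.

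First I would observe that $\bbF_{1^n}$ behaves like a field: its only non-unit is the absorbing element $0$, so $\MSpec\bbF_{1^n}$ is a one-point space. Consequently any morphism $f\colon\MSpec\bbF_{1^n}\to X$ factors through the local piece at a well-defined point $x\in X$ and is given by a local monoid homomorphism $\phi\colon\sts{X,x}\to\bbF_{1^n}$. This produces the decomposition
\[X(\bbF_{1^n})=\bigsqcup_{x\in X}\Hom_{\mathrm{loc}}(\sts{X,x},\bbF_{1^n}),\]
which is a \emph{finite} disjoint union by Proposition \ref{prop: rational-pts}.

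Next I would argue that locality forces $\phi$ to send the maximal ideal $\sts{X,x}\setminus\sts{X,x}^\times$ to $0$, so $\phi$ is determined by its restriction to units; conversely any group homomorphism $\sts{X,x}^\times\to\bbF_{1^n}^\times=C_n$ extends uniquely to a local homomorphism by sending non-units to $0$. Hence
\[\Hom_{\mathrm{loc}}(\sts{X,x},\bbF_{1^n})\cong\Hom_{\Abel}(\sts{X,x}^\times,C_n).\]
Applying the structural isomorphism $\sts{X,x}^\times\cong\bbZ^{r_x}\times\prod_{j=1}^{l_x}\bbZ/t_{x,j}\bbZ$ from Definition \ref{def: integers}, the right-hand count splits as a product: each $\bbZ$-factor contributes $\#\Hom(\bbZ,C_n)=n$, while each cyclic factor contributes $\#\Hom(\bbZ/t_{x,j}\bbZ,C_n)=\#C_n[t_{x,j}]=\gcd(n,t_{x,j})$. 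Summing the product $n^{r_x}\prod_j\gcd(n,t_{x,j})$ over $x\in X$ then yields the asserted formula.

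The only substantive step is the first one: invoking Deitmar's monoid-scheme formalism to reduce a global Hom-count to a disjoint union of stalk-level local Hom-counts. Once that reduction is in hand, the remainder is a routine combination of the structure theorem for finitely generated abelian groups with elementary Hom-counting, so no serious technical obstacle is anticipated.
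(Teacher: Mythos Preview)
The paper does not supply its own proof of this proposition: it is simply quoted from \cite{Connes-Consani_Compositio}*{Proposition 3.22} without argument. Your proposal is the natural way to establish the formula and is essentially how the cited source proceeds (stratify by the image point, reduce to local homomorphisms into $\bbF_{1^n}$, then count group homomorphisms into $C_n$ using the structure of $\sts{X,x}^\times$), so there is nothing to compare against within the present paper.
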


\subsection{Ceiling/Floor polynomials}

Let $X$ be a monoid scheme of finite type.
As we explained in \S\ref{section:Intro}, Deitmar, Koyama and Kurokawa \cite{Deitmar-Koyama-Kurokawa_2015} identified the absolute zeta function of $N_{X_\bbZ}$ with the absolute zeta function of the polynomial $\CPoly_{X_\bbZ}$.
In this subsection, we characterize $\CPoly_{X_\bbZ}$ as the ceiling polynomial of $X_\bbZ$.
We firstly show the uniqueness of the ceiling (resp.\ floor) polynomial of a given sequence.

\begin{lemma} \label{lem: existence_ceiling-floor}
Let $\mathcal{P}$ be an infinite subset of $\bbN$ and $\boldsymbol{A} = (A_{n})_{n \in \mathcal{P}}$ be a sequence in $\bbZ$. Then, there exists at most one polynomial $f(t) \in \bbR[t]$
satisfying the following conditions:
\begin{enumerate}
    \item The inequality $f(n) \geq A_{n}$ (resp.\ $f(n) \leq A_{n}$) holds for every $n \in \mathcal{P}$.
    \item There exist infinitely many $n \in \mathcal{P}$ such that $f(n) = A_{n}$.
\end{enumerate}
\end{lemma}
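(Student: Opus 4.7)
The plan is to argue by contradiction: assume two distinct polynomials $f,g\in\bbR[t]$ both satisfy conditions (1) and (2) for the sequence $\boldsymbol{A}=(A_n)_{n\in\mathcal{P}}$, and then derive a contradiction from the eventual sign behavior of the nonzero polynomial $h:=g-f$.

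First I would focus on the ceiling version ($f(n)\geq A_n$ and $g(n)\geq A_n$); the floor case is completely symmetric and can be disposed of in a single sentence at the end. Let $S_f:=\{n\in\mathcal{P}\mid f(n)=A_n\}$ and $S_g:=\{n\in\mathcal{P}\mid g(n)=A_n\}$. By hypothesis both are infinite subsets of $\bbN$, hence unbounded. At any $n\in S_f$ the ceiling condition for $g$ gives $h(n)=g(n)-f(n)\geq A_n-A_n=0$, while at any $n\in S_g$ the ceiling condition for $f$ gives $h(n)=g(n)-f(n)\leq 0$. Consequently $h$ attains non-negative values at arbitrarily large integers and non-positive values at arbitrarily large integers.

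The concluding step is to observe that a nonzero polynomial in $\bbR[t]$ has eventually constant sign: its leading term dominates, so there exists $N$ with $\mathrm{sgn}\,h(t)$ constant for all $t\geq N$. This directly contradicts the previous paragraph unless $h\equiv 0$, i.e.\ $f=g$. For the floor version one applies the same argument to $-f,-g$ and the sequence $(-A_n)_{n\in\mathcal{P}}$, or equivalently flips the inequalities throughout.

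I do not anticipate any genuine obstacle: the only subtlety is making sure the infinite subsets of $\mathcal{P}$ are unbounded in $\bbN$, which is immediate, and that "eventually constant sign" is invoked for the polynomial $h$ and not separately for $f$ or $g$ (whose own signs need not be eventually constant in any useful way). The argument does not use the integrality of $A_n$ at all, so the same uniqueness in fact holds for arbitrary $\bbR$-valued sequences, though I would state it only in the form needed.
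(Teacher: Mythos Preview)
Your argument is correct and is essentially the same as the paper's: both hinge on the fact that the nonzero polynomial $f-g$ has eventually constant sign, and then use condition~(1) for one function together with condition~(2) for the other to derive a contradiction. The only cosmetic difference is that the paper does a trichotomy on the eventual sign of $f-g$ and derives the contradiction in each nonzero case, whereas you first extract the sign information on the equality sets $S_f$ and $S_g$ and then invoke eventual constancy; the logical content is identical.
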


\begin{proof}
Suppose that $f, g \in \bbR[t]$ satisfy both of the conditions.
Then, since $f-g$ is a polynomial, we have the following three possibilities:
\begin{itemize}
    \item There exists $N \in \bbN$ such that $f(n)-g(n) > 0$ for every $n > N$.
    \item There exists $N \in \bbN$ such that $f(n)-g(n) < 0$ for every $n > N$.
    \item $f(n)-g(n) = 0$ for every $n \in \bbN$, i.e., $f = g$ in $\bbR[t]$.
\end{itemize}
In the first case, since $g$ (resp.\ $f$) satisfies the first condition, the inequality $f(n) > g(n) \geq A_{n}$ (resp. $A_{n}\geq f(n) > g(n)$) holds for every $n > N$, which contradicts that $f$ (resp.\ $g$) satisfies the second condition.
By changing the roles of $f$ and $g$, we see that the second case is also impossible.
Thus, we obtain the conclusion.
\end{proof}

\begin{definition}\label{def: ceiling-floor_sequence}
When the polynomial $f$ in Lemma \ref{lem: existence_ceiling-floor} exists, we call the unique polynomial $f$ the \emph{ceiling} (resp.\ \emph{floor}) \emph{polynomial} of $\boldsymbol{A}$.
\end{definition}

\begin{definition}
Let $S$ be a proper subset of $\primes$ and $\mathcal{X}$ be a scheme of finite type over $\bbZ[S^{-1}]$. We call the ceiling (resp.\ floor) polynomial of $\left(\#\mathcal{X}(\bbF_{q})\right)_{q\in\PrimePow_S}$ the \emph{ceiling} (resp.\ \emph{floor}) \emph{polynomial} of $\mathcal{X}$ and denote it by $\CPoly_{\mathcal{X}}$ (resp.\ $\FPoly_{\mathcal{X}}$).
\end{definition}

According to Propositions \ref{prop: rational-pts} and \ref{prop: rational-points_monoid-scheme}, we obtain the ceiling (resp.\ floor) polynomial of the $\bbZ[S^{-1}]$-lift of a monoid scheme of finite type.

\begin{theorem} \label{thm: Noet-scheme_ceiling-floor}
Let $X$ be a monoid scheme of finite type and $S$ be a finite subset of $\primes$. Set $\mathcal{X}:=X_{\bbZ[S^{-1}]}$,
\[e_{x,j,S}:=\begin{cases}1 &\text{if $2\mid t_{x,j}$ and $2\in S$,}\\0 &\text{otherwise,}\end{cases}\quad\text{and}\quad T_{x,S}:=\prod_{j=1}^{l_x}2^{e_{x,j,S}}.\]
Then, it holds that
\[\CPoly_{\mathcal{X}}(t)=\sum_{x\in X} T_x(t-1)^{r_{x}}\in \bbZ[t]\quad\text{and}\quad \FPoly_{\mathcal{X}}(t)=\sum_{x\in X} T_{x,S}(t-1)^{r_{x}}\in \bbZ[t].\]
In particular, $\CPoly_{\mathcal{X}}$ is independent of $S$. Moreover, it holds that
\[\zeta_{\CPoly_{\mathcal{X}}}(s)=\prod_{k=0}^{R_X}(s-k)^{\sum_{x\in X}T_x(-1)^{r_{x}-k+1}\binom{r_{x}}{k}}\]
and
\[\zeta_{\FPoly_{\mathcal{X}}}(s)=\prod_{k=0}^{R_X}(s-k)^{\sum_{x\in X}T_{x,S}(-1)^{r_{x}-k+1}\binom{r_{x}}{k}}.\]
\end{theorem}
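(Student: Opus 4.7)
The plan is to start from the Connes-Consani formula, which combined with Proposition \ref{prop: rational-pts} gives
\[\#\mathcal{X}(\bbF_q) = \sum_{x \in X}(q-1)^{r_x}\prod_{j=1}^{l_x}\gcd(q-1, t_{x,j})\]
for every $q \in \PrimePow_S$. The first step is the elementary sandwich $2^{e_{x,j,S}} \leq \gcd(q-1, t_{x,j}) \leq t_{x,j}$: the upper estimate is trivial, and the lower estimate is non-vacuous only when $e_{x,j,S} = 1$, in which case $2 \in S$ forces $q = p^m$ with $p$ odd, so $q - 1$ is even. Multiplying over $j$ and summing over $x$ yields
\[\sum_{x} T_{x,S}(q-1)^{r_x} \leq \#\mathcal{X}(\bbF_q) \leq \sum_{x} T_x(q-1)^{r_x}\]
for all $q \in \PrimePow_S$, which verifies condition (1) of Definition \ref{def: ceiling-floor_sequence} for both candidate polynomials. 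Note that the formula for the upper candidate does not involve $S$, which will give the stated independence once it is proved to be the actual ceiling polynomial.

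Next I would produce the equalities required by condition (2) and invoke Lemma \ref{lem: existence_ceiling-floor}. For $\CPoly_{\mathcal{X}}$, Dirichlet's theorem furnishes infinitely many primes $p \equiv 1 \pmod{T}$ avoiding $S$, where $T = \mathrm{lcm}_{x,j}\,t_{x,j}$; for $q = p$ every $t_{x,j}$ divides $q - 1$, realizing the upper bound. For $\FPoly_{\mathcal{X}}$ I would split on whether $2 \in S$. When $2 \in S$, Dirichlet combined with the Chinese remainder theorem produces infinitely many primes $p \equiv 3 \pmod 4$ with $p \not\equiv 1 \pmod \ell$ for every odd prime $\ell$ dividing some $t_{x,j}$; the underlying CRT system is consistent because each such $\ell \geq 3$ admits a unit residue class distinct from $1$. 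When $2 \notin S$, I would take $q = 2^m$ and choose $m$ to avoid the multiplicative order of $2$ modulo each odd prime $\ell$ dividing some $t_{x,j}$, which excludes finitely many arithmetic progressions and leaves infinitely many admissible $m$. In both sub-cases one checks that $\gcd(q - 1, t_{x,j}) = 2^{e_{x,j,S}}$ for every $(x, j)$, realizing the lower bound. Lemma \ref{lem: existence_ceiling-floor} then identifies the ceiling and floor polynomials as claimed. The absolute zeta functions follow by expanding
\[\sum_{x \in X} T_x (t-1)^{r_x} = \sum_{k=0}^{R_X}\left(\sum_{x \in X} T_x(-1)^{r_x - k}\binom{r_x}{k}\right) t^k\]
via the binomial theorem and applying Soul\'e's identity (\ref{eq: Soule-limit}); the floor zeta function is obtained by the same computation with $T_{x,S}$ in place of $T_x$.

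The main obstacle I expect is the floor equality in the case $2 \in S$, where the $2$-adic and odd-prime constraints on $p$ must be balanced simultaneously; the key point is that for each odd prime $\ell$ appearing in any $t_{x,j}$ the two forbidden residues $0$ and $1$ modulo $\ell$ still leave room (since $\ell - 2 \geq 1$), so the CRT system is solvable and Dirichlet's theorem can be applied. Once that point is checked, everything else reduces to the binomial expansion and the closed form (\ref{eq: Soule-limit}).
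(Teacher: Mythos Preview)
Your proposal is correct and follows essentially the same route as the paper's proof: both start from the Connes--Consani formula, sandwich the gcd factors, invoke Dirichlet for the ceiling equality, and split the floor equality according to whether $2\in S$. The only cosmetic differences are that the paper takes the single residue $p\equiv 2\pmod{T_X'}$ (with $T_X'$ the odd part of $T_X$) in place of your prime-by-prime CRT system when $2\in S$, and uses the explicit exponents $m=\varphi(T_X')k+1$ via Euler's theorem (so that $2^m-1\equiv 1\pmod{T_X'}$) in place of your order-avoidance argument when $2\notin S$; your choices work just as well.
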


\begin{proof}
At first, we consider the polynomial $\CPoly_{\mathcal{X}}$. The first condition in Lemma \ref{lem: existence_ceiling-floor} follows from the inequality $\gcd(n-1, t_{x, j}) \leq t_{x, j}$ for any $n\in\bbN$. We can check the second condition by applying Dirichlet's theorem on arithmetic progressions to the prime numbers $p$ such that $p\equiv 1\pmod{T_X}$. Thus, the polynomial $\sum_{x\in X} T_x(t-1)^{r_{x}}$ coincides with $\CPoly_{\mathcal{X}}$.

Next, we consider the polynomial $\FPoly_{\mathcal{X}}$. Let $T_X'$ be the odd integer satisfying $T_X=2^{e}T_X'$ for some $e\in\bbN_0$. The first condition follows from the inequality $\gcd(q-1, t_{x,j}) \geq 2^{e_{x,j,S}}$ for any $x\in X$, $j\in\{1,\ldots,l_x\}$ and $q\in\PrimePow_S$. The second condition in the case where $2\not\in S$ follows from the fact that $2^{\varphi(T_X') k+1}-1\equiv 1\pmod{T_X'}$ for any $k\in\bbN$, where $\varphi$ is Euler's totient function. In the case where $2\in S$, we see that there are infinitely many $p\in\primes\setminus S$ such that $p\equiv 2 \pmod{T_X'}$ and $p\equiv 3 \pmod{4}$ by combining Dirichlet's theorem on arithmetic progression and the Chinese remainder theorem. We denote the set of such $p$'s by $P$. For $p\in P$, it holds that $\gcd(p-1, T_X) = 2$ (resp.\ $1$) when $T_X$ is even (resp.\ odd), and hence $\gcd(p-1, t_{x,j}) = 2^{e_{x,j,S}}$ for any $x\in X$ and $j\in\{1,\ldots,l_x\}$.
Thus, the second condition follows.

The equation on the absolute zeta function follows from the equation (\ref{eq: Soule-limit}) and the calculation of $\CPoly_{\mathcal{X}}$ and $\FPoly_{\mathcal{X}}$.
\end{proof}

\begin{remark}
Let $X = (X, \mathcal{O}_{X})$ be a monoid scheme of finite type. Then,
\[\sum_{x\in X} T_x(t-1)^{r_{x}} \in \bbZ[t]\quad \left(\text{resp.}\ \sum_{x\in X} (t-1)^{r_{x}}\in \bbZ[t]\right)\]
is the ceiling (resp.\ floor) polynomial of $\left(\#X(\bbF_{1^{n-1}})\right)_{n\in\bbN\cap[2,\infty)}$ by Proposition \ref{prop: rational-points_monoid-scheme} and a similar argument of the proof of Theorem \ref{thm: Noet-scheme_ceiling-floor}.
In fact, the floor polynomial of $\left(\#X(\bbF_{1^{n-1}})\right)_{n\in\bbN\cap[2,\infty)}$ coincides with the polynomial $N(x)$ introduced by Deitmar in \cite{Deitmar2006}*{Theorem 1} since it satisfies the condition therein and such a polynomial is unique.
\end{remark}

Theorem \ref{thm: Noet-scheme_ceiling-floor} shows that $\zeta_{\CPoly_{\mathcal{X}}}(s)$ coincides with $\zeta_{N_{X_\bbZ}}(s)$ in Theorem \ref{thm: DKK-AZF}, which Deitmar, Koyama and Kurokawa obtained in \cite{Deitmar-Koyama-Kurokawa_2015} using the Fourier expansion. Thus, $\zeta_{N_{X_\bbZ}}(s)$ is an invariant of $X_{\bbZ[S^{-1}]}$ independent of $S$, and hence it is an invariant of its generic fiber $X_\bbQ$ (cf.\ Example \ref{eg: monoid_scheme}).

\subsection{Other examples of ceiling/floor polynomials}\label{subsection: examples_ceiling-floor}

We give some examples of the ceiling (resp.\ floor) polynomials of other specific schemes over $\bbZ[S^{-1}]$, especially those of relative dimension $1$.

\begin{theorem}\label{thm: proj-line_1}
Let $n\in\bbN$, $\mathcal{A}_n:=\bbA^1_\bbZ\setminus\{0,1,\ldots,n-1\}$ and $S$ be a finite subset of $\primes$.
Set $n_1:=\min_{p\in\primes\setminus S}\{p,n\}$. Then, it holds that
\[\CPoly_{\mathcal{A}_{n,\bbZ[S^{-1}]}}(t)=t-n_1\quad\text{and}\quad\FPoly_{\mathcal{A}_{n,\bbZ[S^{-1}]}}(t)=t-n.\]
\end{theorem}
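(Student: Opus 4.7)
The plan is to reduce the whole statement to a direct verification of the two defining conditions in Lemma~\ref{lem: existence_ceiling-floor}. First I would compute $\#\mathcal{A}_{n,\bbZ[S^{-1}]}(\bbF_{p^m})$ for $p^m\in\PrimePow_S$. Since $\mathcal{A}_n$ is obtained from $\bbA^1_\bbZ$ by removing the closed subschemes cut out by $x-i$ for $i=0,1,\ldots,n-1$, an $\bbF_{p^m}$-point of $\mathcal{A}_{n,\bbZ[S^{-1}]}$ is an element of $\bbF_{p^m}$ whose value differs from the image of $i$ in $\bbF_p\subseteq\bbF_{p^m}$ for each such $i$. The residues of $0,1,\ldots,n-1$ modulo $p$ form a set of exactly $\min(n,p)$ distinct elements of $\bbF_p$, so
\[\#\mathcal{A}_{n,\bbZ[S^{-1}]}(\bbF_{p^m})=p^m-\min(n,p).\]

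For the ceiling polynomial $f(t)=t-n_1$, the inequality $f(p^m)\geq \#\mathcal{A}_{n,\bbZ[S^{-1}]}(\bbF_{p^m})$ is equivalent to $n_1\leq\min(n,p)$ for every $p\in\primes\setminus S$, which is precisely the minimality built into the definition of $n_1$. For the equality condition, I would let $p_0$ be the smallest prime in $\primes\setminus S$; a short case split (according as $p_0\leq n$ or $p_0>n$) shows $\min(n,p_0)=n_1$, so equality holds at every prime power $q=p_0^m$, producing infinitely many equalities.

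For the floor polynomial $g(t)=t-n$, the inequality $g(p^m)\leq \#\mathcal{A}_{n,\bbZ[S^{-1}]}(\bbF_{p^m})$ is the trivial bound $\min(n,p)\leq n$. Since $S$ is finite, infinitely many primes $p\in\primes\setminus S$ satisfy $p\geq n$, and for every such $p$ we have $\min(n,p)=n$, so equality holds at every $p^m$. The uniqueness of both polynomials is then immediate from Lemma~\ref{lem: existence_ceiling-floor}. I do not anticipate any serious obstacle; the only mildly subtle step is the residue count in the regime $p<n$, where one must observe that the integers $0,1,\ldots,n-1$ exhaust all of $\bbF_p$ and hence contribute exactly $p$ excluded classes rather than $n$.
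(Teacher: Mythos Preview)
Your argument is correct and follows the same approach as the paper: the paper's proof consists solely of the point-count identity $\#\mathcal{A}_{n,\bbZ[S^{-1}]}(\bbF_q)=q-\min\{p,n\}$ and declares the result to follow, whereas you spell out the verification of the two conditions of Lemma~\ref{lem: existence_ceiling-floor} explicitly.
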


\begin{proof}
This follows from the fact that
\[\#\mathcal{A}_{n,\bbZ[S^{-1}]}(\bbF_q)=q-\#(\bbF_p\cap\{0,1,\ldots,n-1\})=q-\min\{p,n\}\]
for each $q=p^m\in\PrimePow_S$.
\end{proof}

Let $n\geq 2$. Replacing $\{0,1,\ldots,n-1\}$ with $\{0\}\cup\mu_{n-1}$, where $\mu_{n-1}$ is the set of the $(n-1)$-th roots of unity, we can obtain the following result.

\begin{theorem}\label{thm: proj-line_2}
Let $n\in\bbN\cap[2,\infty)$, $\mathcal{G}_n:=\bbA^1_\bbZ\setminus\left(\{0\}\cup\mu_{n-1}\right)=\mathbb{G}_{m,\bbZ}\setminus\mu_{n-1}$ and $S$ be a finite subset of $\primes$.
Set
\[n_2:=\begin{cases}3 &\text{if $2\nmid n$ and $2\in S$,}\\2 &\text{otherwise.}\end{cases}\]
Then, it holds that \[\CPoly_{\mathcal{G}_{n,\bbZ[S^{-1}]}}(t)=t-n_2\quad\text{and}\quad\FPoly_{\mathcal{G}_{n,\bbZ[S^{-1}]}}(t)=t-n.\]
\end{theorem}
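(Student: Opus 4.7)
The plan is to compute $\#\mathcal{G}_{n,\bbZ[S^{-1}]}(\bbF_q)$ explicitly and then apply Lemma \ref{lem: existence_ceiling-floor} together with Dirichlet's theorem on primes in arithmetic progressions, in the same spirit as the proof of Theorem \ref{thm: Noet-scheme_ceiling-floor}. Since $\bbF_q^\times$ is cyclic of order $q-1$, the $\bbF_q$-rational points of $\mu_{n-1}$ form the unique subgroup of order $\gcd(n-1, q-1)$, and this subgroup is disjoint from $\{0\}$. Therefore
\[\#\mathcal{G}_{n,\bbZ[S^{-1}]}(\bbF_q) = q - 1 - \gcd(n-1, q-1)\]
for every $q \in \PrimePow_S$.

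The floor polynomial is immediate: $\gcd(n-1, q-1) \leq n-1$ forces $\#\mathcal{G}_{n,\bbZ[S^{-1}]}(\bbF_q) \geq q - n$, and Dirichlet's theorem produces infinitely many primes $p \in \primes \setminus S$ with $p \equiv 1 \pmod{n-1}$, at which the bound is attained; Lemma \ref{lem: existence_ceiling-floor} then yields $\FPoly_{\mathcal{G}_{n,\bbZ[S^{-1}]}}(t) = t-n$. For the ceiling polynomial I need the minimum achievable value of $\gcd(n-1, q-1)$ over $q \in \PrimePow_S$. When $2 \mid n$, the integer $n-1$ is odd, and primes $p \in \primes \setminus S$ satisfying $p \equiv 2 \pmod{n-1}$ (which exist since $\gcd(2,n-1)=1$) give $\gcd(n-1, p-1) = 1$, matching $n_2 = 2$. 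When $2 \nmid n$ and $2 \in S$, every allowed $q-1$ is even while $n-1$ is even too, forcing $\gcd(n-1, q-1) \geq 2$; writing $n-1 = 2^a N'$ with $N'$ odd and combining Dirichlet with the Chinese Remainder Theorem exactly as in Theorem \ref{thm: Noet-scheme_ceiling-floor} produces infinitely many $p \in \primes \setminus S$ with $p \equiv 3 \pmod{4}$ and $p \equiv 2 \pmod{N'}$, giving $\gcd(n-1, p-1) = 2$ and matching $n_2 = 3$.

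The genuine obstacle is the remaining case, $2 \nmid n$ and $2 \notin S$, in which I claim $n_2 = 2$. Every odd prime $p$ still yields $\gcd(n-1, p-1) \geq 2$, so no choice $q = p^m$ with $p$ odd can lower the gcd to $1$; one must instead use $q = 2^m$, where $q-1 = 2^m - 1$ is odd and therefore $\gcd(n-1, 2^m - 1) = \gcd(N', 2^m - 1)$ with $N' := (n-1)_{\mathrm{odd}}$. Since for every prime $\ell \mid N'$ one has $\mathrm{ord}_\ell(2) \mid \ell - 1 < \ell \leq N'$ and $\mathrm{ord}_\ell(2) \geq 2$, every prime $m > N'$ satisfies $\mathrm{ord}_\ell(2) \nmid m$ for all such $\ell$, so no $\ell \mid N'$ divides $2^m - 1$ and hence $\gcd(n-1, 2^m - 1) = 1$. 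Infinitely many such primes $m$ exist by Euclid, and a final application of Lemma \ref{lem: existence_ceiling-floor} completes the identification $\CPoly_{\mathcal{G}_{n,\bbZ[S^{-1}]}}(t) = t - n_2$.
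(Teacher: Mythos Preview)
Your argument is correct and follows essentially the same route as the paper. The paper's own proof simply invokes Theorem~\ref{thm: Noet-scheme_ceiling-floor} (together with the observation that $\mu_{n-1}$ is the $\bbZ$-lift of $\MSpec\bbF_{1^{n-1}}$), and your write-up is an explicit unfolding of that reference: the same point-count $\#\mathcal{G}_n(\bbF_q)=q-1-\gcd(n-1,q-1)$, the same use of Dirichlet and the Chinese remainder theorem, and the same appeal to Lemma~\ref{lem: existence_ceiling-floor}. The one cosmetic difference is in the case $2\nmid n$, $2\notin S$: the paper (inside the proof of Theorem~\ref{thm: Noet-scheme_ceiling-floor}) produces infinitely many $m$ with $\gcd(N',2^m-1)=1$ by taking $m=\varphi(N')k+1$ via Euler's theorem, whereas you take $m$ to be any prime exceeding $N'$; both constructions work equally well.
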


\begin{proof}
This follows from Theorem \ref{thm: Noet-scheme_ceiling-floor} and the fact that $\mu_{n-1}$ is the $\bbZ$-lift of $\MSpec\bbF_{1^{n-1}}$.
\end{proof}

We give another example of ceiling (resp.\ floor) polynomials. Let $\mathcal{C}^{\Delta}$ be the Pell conic of discriminant $\Delta\ne 0$, defined as an affine curve over $\bbZ$ defined by
\begin{align*}  
    \begin{cases}x^2-\frac{\Delta}{4}y^2=1 & \text{if $\Delta\equiv 0\bmod{4}$,}\\x^2+xy+\frac{1-\Delta}{4}y^2=1 & \text{if $\Delta\equiv 1\bmod{4}$.}\end{cases}
\end{align*}
Then, the number of the $\bbF_q$-rational points of $\mathcal{C}^{\Delta}$ is given as follows.

\begin{theorem}\label{thm: Pell-curve}
Let $q=p^m\in\PrimePow$. Then,
\[\#\mathcal{C}^{\Delta}(\bbF_q)=\begin{cases}q-\left(\frac{\Delta}{p}\right)^m & \text{if $p\ne 2$, $p\nmid \Delta$,} \\ 2q & \text{if $p\ne 2$, $p\mid \Delta$,} \\ q-(-1)^{\frac{\Delta^2-1}{8}m} & \text{if $p=2$, $2\nmid \Delta$,} \\ q & \text{if $p=2$, $2\mid \Delta$,}\end{cases}\]
where $\left(\frac{\Delta}{p}\right)$ is the Legendre symbol.
Moreover, let $S_\Delta$ be the set of prime numbers dividing $\Delta$. For any finite subset $S$ of $\primes$, it holds that
\[\CPoly_{\mathcal{C}^{\Delta}_{\bbZ[S^{-1}]}}(t)=\begin{cases}2t & \text{if $S_\Delta\setminus\{2\}\not\subset S$,}\\t+1 & \text{if $\Delta$ is not a square and $S_\Delta\setminus\{2\}\subset S$,}\\t-1 & \text{if $\Delta$ is a square and $S_\Delta\subset S$,}\\t & \text{if $\Delta$ is an even square, $S_\Delta\setminus\{2\}\subset S$ and $2\not\in S$,}\end{cases}\]
and \[\FPoly_{\mathcal{C}^{\Delta}_{\bbZ[S^{-1}]}}(t)=t-1.\]
\end{theorem}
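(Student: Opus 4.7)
The plan proceeds in three stages: first compute $\#\mathcal{C}^\Delta(\bbF_q)$ in each of the four regimes, then read off the floor polynomial, and finally carry out a case analysis for the ceiling polynomial.

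\emph{Point counts.} For $p \nmid 2\Delta$, completing the square when $\Delta \equiv 1 \bmod 4$ reduces the equation to $X^2 - (\Delta/4) Y^2 = 1$ over $\bbF_q$. If $\Delta/4$ is a square in $\bbF_q$, say $e^2$, then $(X - eY)(X + eY) = 1$ is parametrized by $u \in \bbF_q^\times \mapsto ((u+u^{-1})/2, (u-u^{-1})/(2e))$, yielding $q - 1$ points; otherwise the anisotropic affine conic has $q + 1$ points (as seen by compactifying in $\bbP^2$ and removing the empty set of points at infinity). Since $\Delta \in \bbF_p^\times$, squareness in $\bbF_{p^m}$ is governed by $\left(\frac{\Delta}{p}\right)^m$. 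For $p \ne 2$ with $p \mid \Delta$, the equation degenerates to $x^2 = 1$, giving $2q$ points. For $p = 2$ with $2 \mid \Delta$, bijectivity of Frobenius lets me rewrite the equation as $(x + \sqrt{\Delta/4}\, y)^2 = 1$, so $x$ is determined by the free parameter $y$, giving $q$ points. For $p = 2$ with $2 \nmid \Delta$, I reduce $c = (1 - \Delta)/4$ modulo $2$: when $\Delta \equiv 1 \bmod 8$ the equation factors as $x(x + y) = 1$ (hence $q - 1$ points); when $\Delta \equiv 5 \bmod 8$ the form $x^2 + xy + y^2$ splits over $\bbF_4$ via a primitive cube root of unity, so it is split over $\bbF_q$ iff $m$ is even, giving $q - 1$ or $q + 1$ points depending on the parity of $m$. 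Both subcases match $q - (-1)^{(\Delta^2 - 1)m/8}$, since $(\Delta^2 - 1)/8$ is even iff $\Delta \equiv \pm 1 \bmod 8$.

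\emph{Floor polynomial.} The counts above give $\#\mathcal{C}^\Delta(\bbF_q) \ge q - 1$ for every $q \in \PrimePow_S$. Dirichlet's theorem furnishes infinitely many odd primes $p \notin S$ with $\left(\frac{\Delta}{p}\right) = 1$, for which $\#\mathcal{C}^\Delta(\bbF_p) = p - 1$. By the uniqueness in Lemma \ref{lem: existence_ceiling-floor}, $\FPoly_{\mathcal{C}^\Delta_{\bbZ[S^{-1}]}} = t - 1$.

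\emph{Ceiling polynomial.} I match the four clauses of the theorem. If $S_\Delta \setminus \{2\} \not\subset S$, pick an odd $p \in S_\Delta \setminus S$; then $\#\mathcal{C}^\Delta(\bbF_{p^m}) = 2p^m$ for every $m$, and since $\#\mathcal{C}^\Delta(\bbF_q) \le 2q$ universally, $\CPoly = 2t$. If $S_\Delta \setminus \{2\} \subset S$ and $\Delta$ is not a square, then $\#\mathcal{C}^\Delta(\bbF_q) \le q + 1$ on $\PrimePow_S$ and Dirichlet's theorem (applied to primes in residue classes making $\Delta$ a non-residue, a set of density $1/2$) provides infinitely many odd $p \notin S$ with $\#\mathcal{C}^\Delta(\bbF_p) = p + 1$; hence $\CPoly = t + 1$. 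If $S_\Delta \subset S$ and $\Delta$ is a square, then $\left(\frac{\Delta}{p}\right) = 1$ for every odd $p \notin S$, and when $2 \notin S$ we necessarily have $\Delta$ an odd square, hence $\Delta \equiv 1 \bmod 8$, so the $p = 2$ count is also $q - 1$; thus $\#\mathcal{C}^\Delta(\bbF_q) = q - 1$ uniformly and $\CPoly = t - 1$. Finally, if $\Delta$ is an even square with $S_\Delta \setminus \{2\} \subset S$ and $2 \notin S$, the $p = 2$ count is $q$ while every odd $p \notin S$ gives $q - 1$, so $\CPoly = t$.

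\emph{Main obstacle.} The delicate step is the point count for $p = 2$ with $\Delta$ odd, which needs an Artin--Schreier-style analysis in characteristic two in place of the Legendre-symbol computation that suffices for odd $p$; in particular, the split/anisotropic dichotomy of $x^2 + xy + y^2$ over $\bbF_2$ must be tracked according to the parity of $m$. Once this is settled, the remaining work is bookkeeping combined with standard applications of Dirichlet's theorem to realize the required congruence classes outside $S$.
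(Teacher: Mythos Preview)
Your proof is correct and follows essentially the same route as the paper: the point counts proceed by the same split/non-split dichotomy for the conic (with the identical $\Delta\equiv 1,5\bmod 8$ subdivision at $p=2$), and the ceiling and floor polynomials are then read off by the case analysis that the paper leaves implicit. The only cosmetic difference is that the paper realizes the non-split count via the isomorphism with $\ker N_{\bbF_{q^2}/\bbF_q}$, whereas you obtain $q+1$ by compactifying the anisotropic conic in $\bbP^2$; these are equivalent standard arguments.
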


\begin{proof}
Assume that $p\ne 2$ and $p\nmid \Delta$.
If $\Delta\bmod{p}\in\bbF_q^{\times 2}$, then we have $\#\mathcal{C}^{\Delta}(\bbF_q)=q-1$ since it holds that
\[\mathcal{C}^{\Delta}(\bbF_q)\cong\bbF_q^\times; (x,y)\mapsto x+\frac{\sqrt{\Delta}}{2}y.\]
If $\Delta\bmod{p}\in\bbF_q^\times\setminus\bbF_q^{\times 2}$, then it holds that
\[\mathcal{C}^{\Delta}(\bbF_{q})\cong\mathop{\mathrm{Ker}}\left(N_{\bbF_{q^2}/\bbF_q}\colon\bbF_{q^2}^\times\to\bbF_q^\times\right); (x,y)\mapsto x+\frac{\sqrt{\Delta}}{2}y\]
and the norm map $N_{\bbF_{q^2}/\bbF_q}$ is surjective. Therefore, we have $\#\mathcal{C}^{\Delta}(\bbF_q)=\#\bbF_{q^2}^\times/\#\bbF_{q}^\times=q+1$. Thus, it holds that $\#\mathcal{C}^{\Delta}(\bbF_q)=q-\left(\frac{\Delta}{p}\right)^m$ if $p\ne 2$ and $p\nmid \Delta$.

Assume that $p\ne 2$ and $p\mid \Delta$, then \[\#\mathcal{C}^{\Delta}(\bbF_{q})=\#\{(x,y)\in\bbF_q\times\bbF_q\mid x^2\equiv 4\pmod{p}\}=2q.\]

Assume that $p=2$ and $p\nmid \Delta$.
If $\Delta\equiv 1\pmod{8}$, then
we have $\#\mathcal{C}^{\Delta}(\bbF_q)=q-1$ since
\[\mathcal{C}^{\Delta}(\bbF_q)\cong\bbF_q^\times; (x,y)\mapsto x.\]
If $\Delta\equiv 5\pmod{8}$ and $m$ is even, then we have $\#\mathcal{C}^{\Delta}(\bbF_q)=q-1$ since
\[\mathcal{C}^{\Delta}(\bbF_q)\cong\bbF_q^\times; (x,y)\mapsto x+\zeta_3 y,\]
where $\zeta_3\in\bbF_q$ denotes a primitive third root of unity.
If $\Delta\equiv 5\pmod{8}$ and $m$ is odd, then we have $\#\mathcal{C}^{\Delta}(\bbF_q)=q+1$ since
\[\mathcal{C}^{\Delta}(\bbF_{q})\cong\mathop{\mathrm{Ker}}N_{\bbF_{q^2}/\bbF_q}; (x,y)\mapsto x+\zeta_3 y,\]
where $\zeta_3\in\bbF_{q^2}$ denotes a primitive third root of unity.

Assume $p=2$ and $p\mid \Delta$, then \[\#\mathcal{C}^{\Delta}(\bbF_q)=\#\{(x,y)\in\bbF_q\times\bbF_q\mid x^2\equiv 1\pmod{2}\}=q.\]

The statements on the ceiling and floor polynomials of $\mathcal{C}^{\Delta}_{\bbZ[S^{-1}]}$ follow from the above calculation of $\#\mathcal{C}^{\Delta}(\bbF_q)$.
\end{proof}

Next, it is natural to study the ceiling (resp.\ floor) polynomial of a curve $\mathcal{C}$ of positive genus defined over $\bbZ[S^{-1}]$. According to Theorem \ref{thm: Pell-curve}, one can expect that the ceiling polynomial crucially depends on the bad reductions of $\mathcal{C}$ and becomes more simple if $\mathcal{C}$ is smooth over $\bbZ[S^{-1}]$.
However, the following result shows that the ceiling polynomial does not exist even for an elliptic curve defined over $\bbZ[S^{-1}]$ whenever $S$ is finite.

\begin{proposition} \label{prop: ceiling_non-existence}
Let $S$ be a finite subset of $\primes$ and $\mathcal{E}$ be an elliptic curve defined over $\bbZ[S^{-1}]$.
Then, there exists no ceiling or floor polynomial of $\mathcal{E}$.
\end{proposition}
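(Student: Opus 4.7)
My plan is a proof by contradiction via Hasse--Weil and the oscillation of Frobenius traces. Since $\mathcal{E}$ is an elliptic curve over $\bbZ[S^{-1}]$, the reduction $\mathcal{E}_{\bbF_p}$ is smooth for every $p \notin S$, so $|a_q|\leq 2\sqrt{q}$ by Hasse--Weil for every $q = p^m \in \PrimePow_S$, where I put $a_q := q+1-\#\mathcal{E}(\bbF_q)$.

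Suppose a ceiling polynomial $\CPoly_{\mathcal{E}}(t) \in \bbR[t]$ exists. Since $|a_q| = o(q)$, a degree comparison forces $\CPoly_{\mathcal{E}}(t) = t + b$ for some $b \in \bbR$: if $\deg \CPoly_{\mathcal{E}} \geq 2$, then $\CPoly_{\mathcal{E}}(q) - \#\mathcal{E}(\bbF_q)\to +\infty$, violating the condition that equality holds infinitely often; degree zero is impossible since $\#\mathcal{E}(\bbF_q)\to\infty$; and any linear polynomial with leading coefficient different from $1$ violates either (1) or (2) for the same asymptotic reason. Condition (1) then reads $a_q \geq 1-b$ for every $q \in \PrimePow_S$.

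I would contradict this by picking any prime $p \notin S$ and showing $\inf_{m\in\bbN}a_{p^m} = -\infty$. Write the Frobenius eigenvalues of $\mathcal{E}_{\bbF_p}$ as $\alpha,\bar\alpha = \sqrt{p}\,e^{\pm i\theta_p}$. Since $\theta_p \in \{0,\pi\}$ would force $a_p = \pm 2\sqrt{p}\notin\bbZ$, we have $\theta_p \in (0,\pi)$. From the formula $a_{p^m} = 2p^{m/2}\cos(m\theta_p)$, it suffices to produce a subsequence $(m_k)$ along which $\cos(m_k\theta_p)$ is bounded above by a strictly negative constant. If $\theta_p/\pi \in \bbQ$, write $\theta_p = 2\pi a/n$ with $\gcd(a,n)=1$ and $n \geq 2$; then the orbit $\{m\theta_p \bmod 2\pi : m \in \bbN\}$ is the full cyclic group of $n$-th roots of unity on the circle, whose cosines have a strictly negative minimum attained periodically in $m$. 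If $\theta_p/\pi \notin \bbQ$, Kronecker's theorem gives a subsequence along which $m_k\theta_p \bmod 2\pi \to \pi$, hence $\cos(m_k\theta_p)\to -1$. Either way $a_{p^{m_k}}\to -\infty$, contradicting the bound $a_q \geq 1-b$.

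The floor case is entirely symmetric: any $\FPoly_{\mathcal{E}}$ must equal $t+b$, condition (1) becomes $a_q \leq 1-b$, and the dual observation (in the rational case, $\cos(m\theta_p)=1$ whenever $n\mid m$; in the irrational case, $\cos(m_k\theta_p)\to 1$) yields $a_{p^{m_k}}\to +\infty$, contradicting the upper bound. The main technical point is the rational-argument case of the cosine dichotomy, which I would handle by the clean observation that the orbit of $\theta_p$ on the circle forms a full cyclic subgroup, automatically hitting both angles near $0$ (giving $\cos$ near $1$) and angles in the left half-plane (giving strictly negative $\cos$).
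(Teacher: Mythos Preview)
Your proof is correct and takes a genuinely different---and substantially more elementary---route than the paper's. The paper's argument lets $p$ vary over primes and invokes the Sato--Tate theorem (Clozel--Harris--Taylor, Barnet-Lamb--Geraghty--Harris--Taylor) to guarantee, for every $\epsilon>0$, the existence of primes with $\#\mathcal{E}(\bbF_p)>p+1+2\sqrt{p}(1-\epsilon)$; combining this with condition~(1) forces $\CPoly_{\mathcal{E}}(t)-t\to+\infty$, hence $\CPoly_{\mathcal{E}}(t)>(1+\delta)t$ for some $\delta>0$, which then overshoots the Hasse bound and violates condition~(2). By contrast, you first pin down $\CPoly_{\mathcal{E}}(t)=t+b$ by a direct degree comparison, then \emph{fix a single prime} $p\notin S$ and exploit the explicit formula $a_{p^m}=2p^{m/2}\cos(m\theta_p)$ together with the elementary dichotomy on $\theta_p/\pi$ (periodic orbit vs.\ Kronecker density) to force $a_{p^{m_k}}\to-\infty$. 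This completely avoids Sato--Tate: the only inputs are Hasse--Weil and the fact that $a_p\in\bbZ$ (so $\theta_p\notin\{0,\pi\}$). What your approach buys is a proof using no automorphic machinery whatsoever; what the paper's approach buys is that it works already at the level of primes $p$ rather than prime powers $p^m$, which is closer in spirit to the ``prime'' variant discussed in their appendix.

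One cosmetic point: in the rational case you write $n\geq 2$, but in fact $\theta_p\in(0,\pi)$ together with $\theta_p=2\pi a/n$ in lowest terms forces $n\geq 3$; this is harmless since already for $n\geq 2$ the $n$-th roots of unity have a member with strictly negative real part (their real parts sum to zero), which is all you use.
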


\begin{proof}
By Hasse's theorem, it holds that
\[
	\#\mathcal{E}(\bbF_{p}) < p+1+2\sqrt{p}
\]
for every prime number $p\in\primes\setminus S$. On the other hand, the Sato-Tate conjecture \cites{BLGHT,Clozel-Harris-Taylor} implies that for every $\epsilon > 0$, there exist prime numbers $p\in\primes\setminus S$ such that
\[
	\#\mathcal{E}(\bbF_{p}) > p+1+2\sqrt{p}(1-\epsilon).
\]
These facts imply that there exists no ceiling polynomial $\CPoly_{\mathcal{E}}$ of $\mathcal{E}$.
Indeed, if such a polynomial $\CPoly_{\mathcal{E}}$ exists, then the Sato-Tate conjecture and the first condition in Lemma \ref{lem: existence_ceiling-floor} imply that
\[
	\forall \alpha > 0, \ \exists N_{0} \in \bbN \ \text{s.t.} \ \forall p\in\primes\setminus S,\ ( p > N_{0} \Rightarrow \CPoly_{\mathcal{E}}(p) > p+\alpha ).
\]
However, since $\CPoly_{\mathcal{E}}$ is a polynomial, the above estimate is equivalent to the following:
\[
	\exists \delta > 0 \ \text{s.t.} \ \exists N_{1} \in \bbN \ \text{s.t.} \ \forall p\in\primes\setminus S,\ ( p > N_{1} \Rightarrow \CPoly_{\mathcal{E}}(p) > (1+\delta)p ).
\]
Since the inequality $(1+\delta)p > p+1+2\sqrt{p}$ holds for every $p \gg 1$, Hasse's theorem implies that
\[
	\exists N_{2} \in \bbN \ \text{s.t.} \ \forall p\in\primes\setminus S,\ ( p > N_{2} \Rightarrow \CPoly_{\mathcal{E}}(p) > \#\mathcal{E}(\bbF_{p}) ),
\]
which contradicts the second condition in Lemma \ref{lem: existence_ceiling-floor}.

The non-existence of the floor polynomial $\FPoly_{\mathcal{E}}$ of $\mathcal{E}$ follows from a similar argument.
\end{proof}

%
%
%
%
%
\section{Ceiling/Floor Puiseux polynomials}\label{section: Q-scheme}
%
%
%
%
%

In this section, we introduce ceiling (resp.\ floor) Puiseux polynomials by replacing the polynomial condition in Lemma \ref{lem: existence_ceiling-floor} by means of Puiseux polynomials. Then, after introducing the ceiling (resp.\ floor) Puiseux polynomial of a separated scheme of finite type over $\bbQ$, we identify the ceiling (resp.\ floor) Puiseux polynomial of an elliptic curve over $\bbQ$ as the Puiseux polynomial $t+2t^{1/2}+1$ (resp.\ $t-2t^{1/2}+1$).

\subsection{Ceiling/Floor Puiseux polynomials}

We begin with the definition of the ceiling (resp.\ floor) Puiseux polynomial of a general integer sequence.

\begin{definition}
Let $R$ be a commutative ring.
We define $R[t^{1/\infty}]$ as the residue ring of the polynomial ring $R\left[t_{n}\mid n \in \bbN\right]$ in countably many indeterminates $t_{n}$ modulo the ideal $I$ generated by $t_{mn}^{m}-t_{n}$ for all $m, n \in \bbN$, and set $t^{1/n} := t_{n} \bmod{I}$.
We call an element of $R[t^{1/\infty}]$ a Puiseux polynomial with coefficients in $R$.
\end{definition}

Suppose that $R = \bbR$ (or its subring). Then, by fixing a branch $1^{1/n} = 1$ for each $n \in \bbN$, each Puiseux polynomial in $\PuiPoly$ defines a continuous function on $\bbR_{\geq 0}$ to $\bbR$.
In what follows, we identify each Puiseux polynomial with this function.
Similarly to Lemma \ref{lem: existence_ceiling-floor}, we show the uniqueness of a certain Puiseux polynomial.

\begin{lemma} \label{lem: existence_ceiling-floor-Puiseux}
Let $\mathcal{P}$ be an infinite subset of $\bbN$ and $\boldsymbol{A} = (A_{n})_{n \in \mathcal{P}}$ be a sequence in $\bbZ$.
Then, there exists at most one Puiseux polynomial $f(t) \in \PuiPoly$
satisfying the following conditions:
\begin{enumerate}
    \item The inequality $f(n) \geq A_{n}$ (resp.\ $f(n) \leq A_{n}$) holds for every $n \in \mathcal{P}$.
    \item There exist infinitely many $n \in \mathcal{P}$ such that the equality $\lfloor f(n)\rfloor = A_{n}$ (resp.\ $\lceil f(n)\rceil = A_{n}$) holds.
    \item $f(1) \in \bbZ$.
\end{enumerate}
\end{lemma}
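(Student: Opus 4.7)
The plan is to mimic the proof of Lemma \ref{lem: existence_ceiling-floor}, with three modifications: (a) the difference of two Puiseux polynomials is not itself a polynomial, so the trichotomy must be re-established via a leading-term analysis; (b) the equality in condition (2) is weakened to "integer part equals $A_n$," which still provides the key upper/lower bound up to an error less than $1$; (c) condition (3) is needed to handle the case where the difference reduces to a nonzero constant.

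Suppose $f, g \in \PuiPoly$ both satisfy the three conditions (for definiteness, the ceiling version). I set $h := f - g \in \PuiPoly$ and aim to conclude $h=0$. Since $h$ lies in $\bbR[t^{1/N}]$ for some $N \in \bbN$, the substitution $u = t^{1/N}$ identifies $h$ with an ordinary polynomial $\tilde h(u) \in \bbR[u]$; hence exactly one of the following holds: (i) $h\equiv 0$, (ii) $h(n)>0$ for all sufficiently large $n$, or (iii) $h(n)<0$ for all sufficiently large $n$.

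In case (ii), I use condition (1) for $g$ and condition (2) for $f$: for infinitely many $n\in\mathcal{P}$,
\[ \lfloor f(n)\rfloor = A_n \leq g(n) < f(n), \]
so $0 < h(n) = f(n)-g(n) \leq f(n)-\lfloor f(n)\rfloor < 1$. Now I inspect the leading term $c\, t^{\alpha}$ of $h$ (with $\alpha\in\bbQ_{\geq 0}$ maximal among the exponents with nonzero coefficient). If $\alpha>0$, then $h(n)\to+\infty$ (since $c>0$ by the sub-case assumption), contradicting $h(n)<1$ infinitely often. If $\alpha=0$, then $h$ is a nonzero constant $c>0$, and condition (3) gives $c = f(1)-g(1) \in \bbZ$, so $c\geq 1$, again contradicting $h(n)<1$. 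Case (iii) is handled symmetrically by exchanging the roles of $f$ and $g$, yielding $-1 < h(n) < 0$ infinitely often and the analogous contradictions. The floor version proceeds identically, with the inequalities $g(n) < f(n) \leq \lceil g(n)\rceil$ in place of the ones above.

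The main obstacle is conceptual rather than technical: one must recognize that (a) leading-term asymptotics for Puiseux polynomials still gives a clean trichotomy after a monomial substitution, and (b) condition (3) is not decorative — without it, a pair such as $f$ and $g = f - \tfrac12$ would satisfy conditions (1) and (2) jointly (when $A_n = \lfloor f(n)\rfloor = \lceil g(n)\rceil$ happens to hold). The integrality at $t=1$ is precisely what kills the constant case $\alpha=0$, so it deserves to be highlighted in the write-up.
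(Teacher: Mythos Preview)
Your proof is correct and follows essentially the same approach as the paper's. The only cosmetic difference is how the trichotomy is organized: the paper phrases it directly as ``$f(n)-g(n)\geq 1$ eventually,'' ``$f(n)-g(n)\leq -1$ eventually,'' or ``$f-g$ is a constant in $(-1,1)$,'' which folds your leading-term sub-case analysis into the case split up front; but the underlying argument (reduce to a polynomial in $t^{1/N}$, use condition~(1) for one function and condition~(2) for the other to bound $|h(n)|<1$ infinitely often, then invoke condition~(3) to eliminate a nonzero constant) is identical.
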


\begin{proof}
Suppose that $f, g \in \PuiPoly$ satisfy both of the conditions. Then, since $f-g$ is a Puiseux polynomial, it is a polynomial of $t^{1/n}$ for some $n \in \bbN$. Hence, we have the following three possibilities:
\begin{itemize}
    \item There exists some $N \in \bbN$ such that $f(n)-g(n) \geq 1$ for every $n > N$.
    \item There exists some $N \in \bbN$ such that $f(n)-g(n) \leq -1$ for every $n > N$.
    \item $f-g$ is a constant in the open interval $(-1, 1)$.
\end{itemize}
In the first case, since $g$ (resp.\ $f$) satisfies the first condition, the inequality $f(n) \geq g(n)+1 \geq A_{n}+1$ (resp.\ $g(n)\leq f(n)-1\leq A_{n}-1$) holds for every $n > N$, which contradicts that $f$ (resp.\ $g$) satisfies the second condition.
By changing the roles of $f$ and $g$, we see that the second case is also impossible. In the third case, it holds that $f=g$ since $f(1)-g(1)=0$ by the third condition.
Thus, we obtain the conclusion.
\end{proof}

\begin{definition}\label{def: ceiling-Puiseux_sequence}
When the Puiseux polynomial $f$ in Lemma \ref{lem: existence_ceiling-floor-Puiseux} exists, we call the unique Puiseux polynomial $f$ the \emph{ceiling} (resp.\ \emph{floor}) \emph{Puiseux polynomial} of $\boldsymbol{A}$.
\end{definition}

If there exists a polynomial with integral coefficients satisfying the conditions in Lemma \ref{lem: existence_ceiling-floor}, then it satisfies the conditions in Lemma \ref{lem: existence_ceiling-floor-Puiseux}. In this sense, the Puiseux polynomial in Lemma \ref{lem: existence_ceiling-floor-Puiseux} is a generalization of the polynomials with integral coefficients in Lemma \ref{lem: existence_ceiling-floor}, which contain polynomials which have been studied in the context of absolute zeta functions (e.g., \cites{soule2004, Deitmar2006, Deitmar-Koyama-Kurokawa_2015}).

As we mentioned after Theorem \ref{thm: Pell-curve}, we can expect a more simple ceiling Puiseux polynomial if the information on pathological prime numbers is excluded. Hence, it is fair to define a ceiling (resp.\ floor) Puiseux polynomial of an algebraic variety over $\bbQ$ (and more generally a separated scheme of finite type over $\bbQ$) as follows.

\begin{definition}\label{def: ceiling-Puiseux_Q-sch}
Let $X$ be a separated scheme of finite type over $\bbQ$. Assume that there exists a Puiseux polynomial $f$ satisfying the following condition: for any separated scheme $\mathcal{X}$ of finite type over $\bbZ$ satisfying that $\mathcal{X}_\bbQ\cong X$, there exists a finite subset $S_{\mathcal{X}}$ of $\primes$ such that for any finite subset $S$ of $\primes$ containing $S_{\mathcal{X}}$, the Puiseux polynomial $f$ is the ceiling (resp.\ floor) Puiseux polynomial of $\left(\#\mathcal{X}(\bbF_q)\right)_{q\in\PrimePow_{S}}$. Then, we call $f$ the \emph{ceiling} (resp.\ \emph{floor}) \emph{Puiseux polynomial} of $X$ and denote it by $\CPoly_X$ (resp.\ $\FPoly_X$).
\end{definition}

The following facts are useful for verification of the uniqueness of the ceiling and floor Puiseux polynomials of $X$ and their practical calculation.

\begin{theorem}[\cite{Serre2012_counting-fct}*{Theorems 4.12 and 4.13}]
Let $\mathcal{X}$ be a separated scheme of finite type over $\bbZ$ and $l$ be a prime number.
Then, there exists a finite subset $\Sigma$ of $\bbP$ (independent of $l$) such that for every $p \in \primes \setminus (\Sigma \cup \{ l \})$ and every $m\in\bbN$, the following equality holds:
\[\#\mathcal{X}(\bbF_{p^m})=\sum_{i=0}^{2\dim \mathcal{X}_{\bbQ}}(-1)^i\mathop{\mathrm{Tr}}(\sigma_p^{-m}\mid H_c^i(\mathcal{X}_{\overline{\bbQ}},\bbQ_l)),\]
where $\sigma_p$ is the $p$-th power Frobenius automorphism in $\mathop{\mathrm{Gal}}(\overline{\bbF_p}/\bbF_p)$, which acts on $H_c^i(\mathcal{X}_{\overline{\bbQ}},\bbQ_l)$ via the specialization map $H_c^i(\mathcal{X}_{\overline{\bbF_p}},\bbQ_l)\overset{\sim}{\to}H_c^i(\mathcal{X}_{\overline{\bbQ}},\bbQ_l)$.
\end{theorem}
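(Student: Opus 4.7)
The plan is to combine three classical pieces of $\ell$-adic étale cohomology: the Grothendieck-Lefschetz trace formula over finite fields, the smooth/proper base change providing a specialization isomorphism between the geometric special and generic fibers, and a spreading-out argument over $\Spec \bbZ$ that isolates finitely many bad primes. The set $\Sigma$ will be the complement in $\primes$ of the dense open $U \subset \Spec \bbZ$ produced by the spreading-out.

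First, I would spread $\mathcal{X}$ out over a dense open $U \subset \Spec \bbZ$ such that the structure morphism $f \colon \mathcal{X}_{U} \to U$ has geometric fibers of dimension $d := \dim \mathcal{X}_{\bbQ}$; in particular $H_{c}^{i}(\mathcal{X}_{\overline{\bbF_{p}}}, \bbQ_{\ell}) = 0$ for $i > 2d$ whenever $p \in U$ and $\ell \neq p$. After shrinking $U$ further, Deligne's generic constructibility theorem for $Rf_{!}\bbQ_{\ell}$ (SGA 4$\tfrac{1}{2}$) guarantees that each $R^{i} f_{!} \bbQ_{\ell}$ is lisse on $U$. Then for each $p \in U$ with $\ell \neq p$, the Grothendieck trace formula applied to the $\bbF_{p}$-scheme $\mathcal{X}_{\bbF_{p}}$ yields
\[\#\mathcal{X}(\bbF_{p^{m}}) = \sum_{i=0}^{2d} (-1)^{i} \mathop{\mathrm{Tr}}(F_{p}^{m} \mid H_{c}^{i}(\mathcal{X}_{\overline{\bbF_{p}}}, \bbQ_{\ell})),\]
where $F_{p} = \sigma_{p}^{-1}$ is the geometric Frobenius.

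Next I would invoke the lisseness of $R^{i} f_{!} \bbQ_{\ell}$ on $U$, which implies that the inertia at $p$ acts trivially and that the specialization map
\[H_{c}^{i}(\mathcal{X}_{\overline{\bbF_{p}}}, \bbQ_{\ell}) \overset{\sim}{\to} H_{c}^{i}(\mathcal{X}_{\overline{\bbQ}}, \bbQ_{\ell})\]
is a Galois-equivariant isomorphism transporting $F_{p}$ to the operator denoted $\sigma_{p}^{-1}$ in the statement. Substituting this identification into the trace formula above gives the desired equality.

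The main obstacle is to arrange that $\Sigma$ be chosen independently of $\ell$, since generic constructibility shrinks $U$ in an a priori $\ell$-dependent way. The cleanest route is to stratify $\mathcal{X}_{\bbQ}$ into smooth locally closed pieces and reduce, via Nagata compactification followed by de Jong's alterations (or resolution of singularities if one stays over $\bbQ$), to the smooth proper case on each stratum. The bad primes are then controlled by the intrinsic geometric locus of bad reduction of the compactification, which is $\ell$-independent; an excision argument via the long exact sequence for the stratification recovers the simultaneous lisseness of every $R^{i} f_{!} \bbQ_{\ell}$ away from this common finite set, producing a single $\Sigma \subset \primes$ that works uniformly for all primes $\ell$.
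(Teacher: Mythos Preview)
The paper does not supply its own proof of this theorem: it is quoted verbatim from Serre's \emph{Lectures on $N_{X}(p)$} (Theorems 4.12 and 4.13) and used as a black box to deduce Corollary~\ref{cor: single_model_is_enough}. So there is nothing in the paper to compare your argument against.

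That said, your outline is essentially the standard route that Serre himself follows. The three ingredients you list---the Grothendieck--Lefschetz trace formula over $\bbF_{p}$, the specialization isomorphism coming from lisseness of $R^{i}f_{!}\bbQ_{\ell}$, and a spreading-out to isolate the bad primes---are exactly the mechanism behind the cited theorems. Your handling of the $\ell$-independence of $\Sigma$ via stratification into smooth pieces, Nagata compactification, and reduction to the smooth proper case (where bad reduction is a geometric, hence $\ell$-free, condition) is also the way Serre arranges matters; he phrases it through an embedding into a simplicial smooth projective resolution, but the content is the same. One small point: your invocation of de Jong's alterations is heavier machinery than strictly needed here, since one is working over $\bbQ$ and Hironaka's resolution is available, but it does no harm.
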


\begin{corollary}\label{cor: single_model_is_enough}
Let $\mathcal{X}, \mathcal{Y}$ be separated schemes of finite type over $\bbZ$ such that $\mathcal{X}_{\bbQ}\cong\mathcal{Y}_{\bbQ}$. Then, there exists a finite subset $\Sigma'$ of $\primes$ such that the following equality holds for every $q\in\PrimePow_{\Sigma'}$:
\[\#\mathcal{X}(\bbF_{q}) = \#\mathcal{Y}(\bbF_{q}). \]
In particular, in the setting of Definition \ref{def: ceiling-Puiseux_Q-sch}, if $f$ is the ceiling (resp. floor) Puiseux polynomial of $(\#\mathcal{X}(\bbF_{q}))_{q \in \bbP_{S}^{\bbN}}$ for some $\mathcal{X}$ and for some $S_{\mathcal{X}}$ with an arbitrary $S\supset S_{\mathcal{X}}$, then it coincides with the ceiling (resp. floor) Puiseux polynomial of $X$.
\end{corollary}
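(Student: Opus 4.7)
The plan is to derive the corollary directly from the preceding theorem of Serre together with the isomorphism invariance of compactly supported $\ell$-adic étale cohomology.

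First I would fix a prime $\ell$ and apply the preceding theorem to each of $\mathcal{X}$ and $\mathcal{Y}$ separately, obtaining finite sets $\Sigma_{\mathcal{X}}, \Sigma_{\mathcal{Y}} \subset \primes$ (independent of $\ell$) such that for every $p \in \primes \setminus (\Sigma_{\mathcal{X}} \cup \Sigma_{\mathcal{Y}} \cup \{\ell\})$ and every $m \in \bbN$ one has
\[\#\mathcal{X}(\bbF_{p^m}) = \sum_{i}(-1)^{i}\mathop{\mathrm{Tr}}(\sigma_{p}^{-m}\mid H_{c}^{i}(\mathcal{X}_{\overline{\bbQ}}, \bbQ_{\ell})),\]
and likewise for $\mathcal{Y}$. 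Since $\mathcal{X}_{\bbQ} \cong \mathcal{Y}_{\bbQ}$ by hypothesis, the functoriality of étale cohomology yields an isomorphism $H_{c}^{i}(\mathcal{X}_{\overline{\bbQ}}, \bbQ_{\ell}) \cong H_{c}^{i}(\mathcal{Y}_{\overline{\bbQ}}, \bbQ_{\ell})$ of $\mathop{\mathrm{Gal}}(\overline{\bbQ}/\bbQ)$-modules for every $i$, whence the Frobenius traces on both sides of the two trace formulas agree. Setting $\Sigma' := \Sigma_{\mathcal{X}} \cup \Sigma_{\mathcal{Y}} \cup \{\ell\}$ then gives $\#\mathcal{X}(\bbF_{q}) = \#\mathcal{Y}(\bbF_{q})$ for every $q \in \PrimePow_{\Sigma'}$, which proves the first assertion.

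For the ``in particular'' clause, suppose $f$ is the ceiling (resp.\ floor) Puiseux polynomial of $(\#\mathcal{X}(\bbF_{q}))_{q \in \PrimePow_{S}}$ for some model $\mathcal{X}$, some $S_{\mathcal{X}}$, and every finite $S \supset S_{\mathcal{X}}$. I would then verify the condition of Definition~\ref{def: ceiling-Puiseux_Q-sch} directly: given any other model $\mathcal{Y}$ of $X$, apply the first part to the pair $(\mathcal{X}, \mathcal{Y})$ to obtain the finite exceptional set $\Sigma'$, and set $S_{\mathcal{Y}} := S_{\mathcal{X}} \cup \Sigma'$. For any finite $S \supset S_{\mathcal{Y}}$ the two sequences $(\#\mathcal{X}(\bbF_{q}))_{q \in \PrimePow_{S}}$ and $(\#\mathcal{Y}(\bbF_{q}))_{q \in \PrimePow_{S}}$ coincide termwise, so $f$ automatically satisfies the three defining conditions of Lemma~\ref{lem: existence_ceiling-floor-Puiseux} for the $\mathcal{Y}$-sequence as well. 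Hence $f$ matches the definition of $\CPoly_{X}$ (resp.\ $\FPoly_{X}$).

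I do not anticipate a serious obstacle: once one invokes the theorem of Serre, the corollary reduces to the fact that compactly supported $\ell$-adic cohomology of the generic fibre is a birational-isomorphism (in fact isomorphism over $\bbQ$) invariant of the integral model, which is immediate. The only subtlety is bookkeeping of the exceptional primes, namely enlarging $S_{\mathcal{X}}$ by $\Sigma'$ to pass from one model to another; I would state this enlargement explicitly so that the definition of $\CPoly_{X}$ is manifestly satisfied.
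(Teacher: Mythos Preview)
Your proposal is correct and is precisely the argument the paper has in mind: the corollary is stated immediately after Serre's theorem with no separate proof, so the intended derivation is exactly the one you give --- apply the trace formula to both models, identify the Galois representations on $H_c^i((\cdot)_{\overline{\bbQ}},\bbQ_\ell)$ via the isomorphism of generic fibres, and absorb the exceptional primes into $\Sigma'$ (and then into $S_{\mathcal{Y}}$ for the ``in particular'' clause). One cosmetic remark: since Serre's $\Sigma$ is independent of $\ell$, you could remove $\ell$ from $\Sigma'$ by switching to a second auxiliary prime, but this is unnecessary for the statement as written.
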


According to this corollary, it is sufficient to verify the condition in Definition \ref{def: ceiling-Puiseux_Q-sch} not for all $\mathcal{X}$ but for a single $\mathcal{X}$. Moreover, the ceiling and floor Puiseux polynomials for such $\mathcal{X}$ are unique respectively if they exist. Using this fact, we obtain the ceiling and floor Puiseux polynomials for the generic fibers of specific schemes which appeared in \S\ref{section: DKK} as follows.

\begin{eg}[cf.~\cite{Deitmar2006}*{Proposition 4.3}]\label{eg: monoid_scheme}
Let $X$ be a monoid scheme of finite type such that $X_\bbZ$ is separated. Thus, it holds that
\[\CPoly_{X_\bbQ}(t)=\sum_{x\in X} T_x(t-1)^{r_{x}}\quad\text{and}\quad\FPoly_{X_\bbQ}(t)=\sum_{x\in X} T_{x,\{2\}}(t-1)^{r_{x}}\]
by Theorem \ref{thm: Noet-scheme_ceiling-floor} and Corollary \ref{cor: single_model_is_enough}. Indeed, it is sufficient to take $\mathcal{X}=X_\bbZ$ and $S_{\mathcal{X}}=\{2\}$. In particular, it holds that $\CPoly_{X_\bbQ}=\FPoly_{X_\bbQ}$ if and only if the torsion subgroup of $\sts{X, x}^{\times}$ is 2-torsion for all $x\in X$.
\end{eg}

\begin{eg}
Put $X=\mathcal{A}_{n,\bbQ}$. By Theorem \ref{thm: proj-line_1} and Corollary \ref{cor: single_model_is_enough}, it holds that \[\CPoly_{\mathcal{A}_{n,\bbQ}}(t)=\FPoly_{\mathcal{A}_{n,\bbQ}}(t)=t-n.\]
Indeed, it is sufficient to take $\mathcal{X}=\mathcal{A}_{n}$ and $S_{\mathcal{X}}$ as the set of prime numbers less than $n$.
\end{eg}

\begin{eg}
Put $X=\mathcal{G}_{n,\bbQ}$. By Theorem \ref{thm: proj-line_2} and Corollary \ref{cor: single_model_is_enough}, it holds that \[\CPoly_{\mathcal{G}_{n,\bbQ}}(t)=t-2\quad\text{and}\quad\FPoly_{\mathcal{G}_{n,\bbQ}}(t)=t-n.\]
Indeed, it is sufficient to take $\mathcal{X}=\mathcal{G}_{n}$ and $S_{\mathcal{X}}=\{2\}$. In particular, it holds that $\CPoly_{\mathcal{G}_{n,\bbQ}}=\FPoly_{\mathcal{G}_{n,\bbQ}}$ if and only if $n=2$.
\end{eg}

\begin{eg}
Put $X=\mathcal{C}^{\Delta}_\bbQ$. By Theorem \ref{thm: Pell-curve} and Corollary \ref{cor: single_model_is_enough}, it holds that
\[\CPoly_{\mathcal{C}^{\Delta}_\bbQ}(t)=\begin{cases}t-1 & \text{if $\Delta$ is a square,}\\t+1 & \text{if $\Delta$ is not a square,}\end{cases}\quad\text{and}\quad \FPoly_{\mathcal{C}^{\Delta}_\bbQ}(t)=t-1.\]
Indeed, it is sufficient to take $\mathcal{X}=\mathcal{C}^{\Delta}$ and $S_{\mathcal{X}}$ as the set of prime numbers dividing $2\Delta$.
In particular, it holds that $\CPoly_{\mathcal{C}^{\Delta}_\bbQ}=\FPoly_{\mathcal{C}^{\Delta}_\bbQ}$ if and only if $\Delta$ is square, which is equivalent to $\mathcal{C}^{\Delta}_\bbQ\cong \mathbb{G}_{m,\bbQ}$.
Note that even if $\Delta$ is not a square, the scalar extension (base change) $\mathcal{C}^{\Delta}_\bbQ\otimes\bbQ(\sqrt{\Delta})$ can be identified with the $\bbQ(\sqrt{\Delta})$-lift of the monoid scheme $\mathbb{G}_{m, \fun}$.
\end{eg}

\subsection{Ceiling/Floor Puiseux polynomial of a projective curve and its maximal/minimal reduction}\label{subsection: curve}

Let $C$ be a smooth proper curve over $\bbQ$ which is geometrically irreducible of genus $g>0$. Then, by the spreading out principle (see \cite{poonen2017rational}*{Theorem 3.2.1}), there exist a finite subset $S_{C}$ of $\primes$ and a smooth proper scheme $\mathcal{C}$ of finite type over $\bbZ[S_{C}^{-1}]$ such that $\mathcal{C}_{\bbQ} \cong C$.

For $q=p^m\in\PrimePow_{S_{C}}$, the Hasse-Weil bound (see \cite{Serre2012_counting-fct}*{\S4.7.2.2}) implies that
\[q-2g\sqrt{q}+1 \leq \#\mathcal{C}(\bbF_q) \leq q+2g\sqrt{q}+1.\]
The closed fiber $\mathcal{C}_{\bbF_p}$ of $\mathcal{C}$ is called \emph{$\bbF_q$-maximal} (resp.\ \emph{$\bbF_q$-minimal}) if $\#\mathcal{C}(\bbF_q)$ attains the Hasse-Weil upper (resp.\ lower) bound, i.e.,
\[\#\mathcal{C}(\bbF_q) = q+2g\sqrt{q}+1\quad (\mathrm{resp.}\ \#\mathcal{C}(\bbF_q) = q-2g\sqrt{q}+1).\]
In view of the ceiling (resp. floor) Puiseux polynomial, we are interested in the distribution of the prime powers $q$ for which $\mathcal{C}_{\bbF_p}$ is $\bbF_q$-maximal (resp. $\bbF_q$-minimal). By the definition of the ceiling (resp.\ floor) Puiseux polynomial of $C$, we obtain the following proposition.

\begin{proposition}\label{prop: ceiling-floor_curve}
Assume that there exist infinitely many prime numbers $p\in\primes\setminus S_{C}$ for which $\mathcal{C}_{\bbF_p}$ is $\bbF_{p^m}$-maximal (resp.\ $\bbF_{p^m}$-minimal) for some $m\in\bbN$. Then, it holds that
\[\CPoly_C(t)=t+2gt^{1/2}+1\quad \left(\text{resp. }\FPoly_C(t)=t-2gt^{1/2}+1\right).\]
\end{proposition}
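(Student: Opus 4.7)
The plan is to reduce, via Corollary \ref{cor: single_model_is_enough}, to verifying the defining conditions only for the distinguished smooth proper model $\mathcal{C}$ over $\bbZ[S_{C}^{-1}]$. Given any separated scheme $\mathcal{X}$ of finite type over $\bbZ$ with $\mathcal{X}_{\bbQ} \cong C$, I would regard $\mathcal{C}$ as a $\bbZ$-scheme via the composition $\mathcal{C} \to \Spec\bbZ[S_{C}^{-1}] \to \Spec\bbZ$ (which has the same $\bbF_{q}$-points as the original whenever $q \in \PrimePow_{S_{C}}$), apply Corollary \ref{cor: single_model_is_enough} to obtain a finite set $\Sigma'$ of primes with $\#\mathcal{X}(\bbF_{q}) = \#\mathcal{C}(\bbF_{q})$ for all $q \in \PrimePow_{\Sigma' \cup S_{C}}$, and then set $S_{\mathcal{X}} := \Sigma' \cup S_{C}$. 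It then suffices to verify the three conditions of Lemma \ref{lem: existence_ceiling-floor-Puiseux} for $f(t) := t + 2gt^{1/2} + 1$ against $\left(\#\mathcal{C}(\bbF_{q})\right)_{q \in \PrimePow_{S}}$ for any finite $S \supset S_{\mathcal{X}}$.

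Condition (1) is the Hasse-Weil upper bound $\#\mathcal{C}(\bbF_{q}) \leq q + 2g\sqrt{q} + 1 = f(q)$ recalled just above, and condition (3) is immediate since $f(1) = 2g + 2 \in \bbZ$. For condition (2), I would exploit the hypothesis: it produces infinitely many primes $p \in \primes \setminus S_{C}$ each admitting an $m_{p} \in \bbN$ with $\#\mathcal{C}(\bbF_{p^{m_{p}}}) = p^{m_{p}} + 2g\sqrt{p^{m_{p}}} + 1$. Since the left-hand side is an integer, $g > 0$, and $\sqrt{p^{m_{p}}}$ is irrational when $m_{p}$ is odd, the index $m_{p}$ is forced to be even. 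Then $f(p^{m_{p}}) = p^{m_{p}} + 2g p^{m_{p}/2} + 1$ is itself an integer, so $\lfloor f(p^{m_{p}}) \rfloor = f(p^{m_{p}}) = \#\mathcal{C}(\bbF_{p^{m_{p}}})$. Only finitely many of these primes $p$ can lie in $S$, so infinitely many remain, giving condition (2) on $\PrimePow_{S}$.

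The floor case proceeds in parallel with the Puiseux polynomial $t - 2gt^{1/2} + 1$, using the Hasse-Weil lower bound for condition (1), $2 - 2g \in \bbZ$ for condition (3), and the analogous equality at $\bbF_{p^{m}}$-minimal fibers for condition (2) (again forcing $m$ even). The main obstacle I anticipate is the parity check: if $m_{p}$ could be odd, the right-hand side of the maximality identity would be irrational, and the exact equality $\lfloor f(p^{m_{p}}) \rfloor = \#\mathcal{C}(\bbF_{p^{m_{p}}})$ could conceivably fail. Ruling this out via the irrationality of $\sqrt{p}$ renders both floor and ceiling operations vacuous in condition (2); the remaining bookkeeping around Corollary \ref{cor: single_model_is_enough} and the enlargement of the excluded finite set of primes is routine.
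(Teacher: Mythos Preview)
Your proposal is correct and follows the paper's (tacit) approach: the paper gives no proof beyond the sentence ``By the definition of the ceiling (resp.\ floor) Puiseux polynomial of $C$, we obtain the following proposition,'' and your argument simply unpacks that definition via Corollary~\ref{cor: single_model_is_enough} and Lemma~\ref{lem: existence_ceiling-floor-Puiseux}. One minor simplification: the parity check you flag as the ``main obstacle'' is automatic rather than an obstacle, since the maximality hypothesis $\#\mathcal{C}(\bbF_{p^{m}}) = p^{m} + 2g\sqrt{p^{m}} + 1$ already forces the right-hand side to be an integer (the left-hand side is), so $\lfloor f(p^{m})\rfloor = f(p^{m}) = \#\mathcal{C}(\bbF_{p^{m}})$ holds without any separate argument about $m$ being even.
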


\subsection{Ceiling/Floor Puiseux polynomial of an elliptic curve}\label{subsection: elliptic_curve}

Let $E$ be an elliptic curve defined over $\bbQ$. Like \S\ref{subsection: curve}, there exist a finite subset $S_{E}$ of $\primes$ and an elliptic curve $\mathcal{E}$ over $\bbZ[S_{E}^{-1}]$ such that $\mathcal{E}_{\bbQ} \cong E$.
Then, the following fact is known concerning a supersingular elliptic curve.

\begin{lemma}[\cite{arithmetic_elliptic-curve}*{p.~155}]\label{lem: supersingular_equiv}
Suppose that $p\in\primes\setminus (S_{E}\cup\{2,3\})$. Then, the following conditions are equivalent:
\begin{enumerate}
    \item $\mathcal{E}_{\bbF_p}$ is supersingular, i.e., $\#\mathcal{E}(\bbF_p) = p + 1$.
    \item $\mathcal{E}_{\bbF_p}$ is $\bbF_{p^{4k-2}}$-maximal and $\bbF_{p^{4k}}$-minimal for any $k\in\bbN$, i.e., $\#\mathcal{E}(\bbF_{p^{4k-2}})=p^{4k-2}+2p^{2k-1}+1$ and $\#\mathcal{E}(\bbF_{p^{4k}})=p^{4k}-2p^{2k}+1$.
    \item $\mathcal{E}_{\bbF_p}$ is $\bbF_{p^2}$-maximal.
    \item It holds that \[Z(\mathcal{E}_{\bbF_p}, T):=\exp\left(\sum_{m=1}^\infty\frac{\#\mathcal{E}(\bbF_{p^m})}{m}T^m\right)=\frac{1+pT^2}{(1-T)(1-pT)}.\]
\end{enumerate}
\end{lemma}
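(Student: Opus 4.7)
The plan is to reduce everything to the Frobenius trace $a_{p} := p+1-\#\mathcal{E}(\bbF_{p})$ and the well-known factorisation of the Weil zeta function of an elliptic curve. By the Weil conjectures for curves, for $p\in\primes\setminus S_E$ there is a factorisation
\[
Z(\mathcal{E}_{\bbF_p},T)=\frac{1-a_{p}T+pT^{2}}{(1-T)(1-pT)}=\frac{(1-\alpha T)(1-\beta T)}{(1-T)(1-pT)},
\]
with $\alpha\beta = p$, $\alpha+\beta = a_{p}$ and $|\alpha|=|\beta|=\sqrt{p}$, and the point-counting formula $\#\mathcal{E}(\bbF_{p^{m}})=p^{m}+1-\alpha^{m}-\beta^{m}$ holds for every $m\in\bbN$. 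I would also invoke the classical fact that for $p\in\primes\setminus(S_E\cup\{2,3\})$, the curve $\mathcal{E}_{\bbF_p}$ is supersingular if and only if $a_{p}=0$.

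I would then prove the equivalence via the cycle $(1)\Leftrightarrow(4)$ and $(1)\Rightarrow(2)\Rightarrow(3)\Rightarrow(1)$. For $(1)\Leftrightarrow(4)$, the condition $\#\mathcal{E}(\bbF_{p})=p+1$ is literally $a_{p}=0$, which is equivalent to the numerator of $Z(\mathcal{E}_{\bbF_p},T)$ being $1+pT^{2}$. For $(1)\Rightarrow(2)$, assume $a_{p}=0$ and choose $\alpha=\sqrt{p}\,i$, $\beta=-\sqrt{p}\,i$; then
\[
\alpha^{m}+\beta^{m}=p^{m/2}\bigl(i^{m}+(-i)^{m}\bigr),
\]
which equals $-2p^{2k-1}$ when $m=4k-2$ and $2p^{2k}$ when $m=4k$, yielding the two claimed equalities and hence $\bbF_{p^{4k-2}}$-maximality and $\bbF_{p^{4k}}$-minimality. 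The implication $(2)\Rightarrow(3)$ is immediate from $k=1$.

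For $(3)\Rightarrow(1)$, use the symmetric function identity $\alpha^{2}+\beta^{2}=(\alpha+\beta)^{2}-2\alpha\beta=a_{p}^{2}-2p$, so that
\[
\#\mathcal{E}(\bbF_{p^{2}})=p^{2}+1-(a_{p}^{2}-2p)=p^{2}+2p+1-a_{p}^{2}.
\]
The assumption of $\bbF_{p^{2}}$-maximality gives $\#\mathcal{E}(\bbF_{p^{2}})=p^{2}+2p+1$, so $a_{p}^{2}=0$ and therefore $a_{p}=0$, proving~(1).

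I do not expect any real obstacle: the argument is essentially bookkeeping with the two Frobenius eigenvalues $\alpha,\beta$ once one knows that the zeta function factors as above and that supersingularity is characterised by $a_{p}=0$ when $p>3$. The one point that requires a moment's care is making sure the exclusion $p\notin\{2,3\}$ is used precisely where it is needed, namely to invoke the equivalence between $\#\mathcal{E}(\bbF_{p})=p+1$ and genuine supersingularity of $\mathcal{E}_{\bbF_p}$ (rather than, e.g., merely the Frobenius trace vanishing); all the other implications go through purely from the Weil factorisation without any restriction on $p$ beyond $p\notin S_{E}$.
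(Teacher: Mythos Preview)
Your proposal is correct and takes essentially the same approach as the paper: both arguments rest on the Frobenius eigenvalue formula $\#\mathcal{E}(\bbF_{p^m})=p^m+1-\alpha^m-\beta^m$ (the paper writes $\beta=p/\alpha$) and reduce each condition to $a_p=0$, equivalently $\alpha^2=-p$. The only cosmetic difference is organisational: the paper shows $(1)\Leftrightarrow\alpha^2=-p$ and then checks that $\alpha^2=-p$ is equivalent to each of $(2)$, $(3)$, $(4)$ directly, whereas you run the cycle $(1)\Rightarrow(2)\Rightarrow(3)\Rightarrow(1)$ together with $(1)\Leftrightarrow(4)$; your symmetric-function computation for $(3)\Rightarrow(1)$ unpacks exactly what the paper leaves implicit.
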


\begin{proof}
Let $\alpha$ be an eigenvalue of the $p$-th power Frobenius endomorphism on the Tate module of $E$. Then, it holds that
\begin{equation}\label{eq: trace-formula}
    \#\mathcal{E}(\bbF_{p^m}) = 1 - \left(\alpha^m + \frac{p^m}{\alpha^m}\right) + p^m \tag{$\star$}
\end{equation}
for any $m\in\bbN$ (see e.g.\ \cite{arithmetic_elliptic-curve}*{Theorem 2.3.1}). In particular, by specializing it to $m=1$, the equivalence $(1)\Leftrightarrow\alpha^2=-p$ follows. The equation $\alpha^2=-p$ is equivalent to (2) and (3), respectively.
Moreover, the equivalence $(1)\Leftrightarrow (4)$ follows since
\begin{align*}
    Z(\mathcal{E}_{\bbF_p}, T) &= \exp\left(\sum_{m=1}^\infty\left(1-\left(\alpha^m+\frac{p^m}{\alpha^m}\right)+p^m\right)\frac{T^m}{m}\right)\\
    &=\frac{(1-\alpha T)(1-\frac{p}{\alpha}T)}{(1-T)(1-pT)}=\frac{1+\left(\#\mathcal{E}(\bbF_p)-p-1\right)T+pT^2}{(1-T)(1-pT)}.
\end{align*}
\end{proof}

Proposition \ref{prop: ceiling-floor_curve} and Lemma \ref{lem: supersingular_equiv} $(1)\Leftrightarrow (2)$ lead us to the natural question whether there exist infinitely many prime numbers $p$ such that $\mathcal{E}_{\bbF_p}$ is supersingular. The answer is known to be affirmative due to Elkies as follows.

\begin{theorem}[\cite{Elkies1987}]\label{thm: Elkies}
Let $E$ be an elliptic curve over $\bbQ$. Then, there exist infinitely many prime numbers at which $E$ has good supersingular reduction.
\end{theorem}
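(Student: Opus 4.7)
The plan is to produce infinitely many primes $p$ at which $\mathcal{E}_{\bbF_p}$ is supersingular by exploiting complex multiplication via Deuring's reduction theorem. After possibly enlarging $S_E$, I fix a good integral model $\mathcal{E}/\bbZ[S_E^{-1}]$ with $j$-invariant $j_E\in\bbZ[S_E^{-1}]$.

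The starting observation is the following consequence of Deuring's theorem: if $K=\bbQ(\sqrt{-D})$ is an imaginary quadratic field with Hilbert class polynomial $H_K(X)\in\bbZ[X]$, whose roots in $\overline{\bbQ}$ are the $j$-invariants of elliptic curves with CM by $\mathcal{O}_K$, and if $p\in\primes\setminus S_E$ is non-split in $K$ (i.e., $\bigl(\tfrac{-D}{p}\bigr)\neq 1$), then every root of $H_K\bmod p$ in $\overline{\bbF_p}$ is a supersingular $j$-invariant. Consequently, any $p\in\primes\setminus S_E$ that is non-split in some $K=\bbQ(\sqrt{-D})$ and divides the numerator of $H_K(j_E)$ is a supersingular prime for $E$. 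Hence the theorem reduces to producing, for the fixed $j_E\in\bbQ$, an infinite family of such pairs $(K,p)$.

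To do this, I would let $D$ range over an infinite sequence of negative fundamental discriminants with $|D|\to\infty$. Since $j_E$ is not itself a CM value (except in a finite list of exceptional $D$'s, handled separately by classical CM theory), the numerator $N_D$ of $H_D(j_E)$ is a nonzero integer for all sufficiently large $|D|$, and singular-moduli height bounds (cf.\ Gross--Zagier) give $\log|N_D|\to\infty$, so the prime divisors of the $N_D$ collectively form an infinite set. The main obstacle is then to guarantee that infinitely many such prime divisors can simultaneously be chosen non-split in the matching $\bbQ(\sqrt{-D})$: a priori, every large prime factor of $N_D$ could split. Elkies' key idea is to bypass this difficulty by working with a family of $D$'s tied to an auxiliary small prime $\ell\in\{2,3,5,7,13\}$ for which the modular curve $X_0^*(\ell)$ has genus zero; one then constructs explicitly, for each $p$ in a suitable infinite set, a CM elliptic curve $E'$ that is $\ell$-isogenous modulo $p$ to $\mathcal{E}$ and whose CM order forces $p$ to be non-split in its CM field. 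I would invoke this modular-curve construction as the decisive ingredient and then apply the Deuring criterion above to conclude the existence of infinitely many primes $p\in\primes\setminus S_E$ at which $\mathcal{E}_{\bbF_p}$ is supersingular.
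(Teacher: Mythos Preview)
The paper does not contain a proof of this theorem at all: Theorem~\ref{thm: Elkies} is stated with the attribution \cite{Elkies1987} and is used as a black box, followed only by a remark pointing to the number-field generalization \cite{Elkies_number-field}. So there is no ``paper's own proof'' to compare your proposal against.

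As for your sketch itself, it correctly names the two main ingredients of Elkies' argument---Deuring's reduction criterion linking supersingular primes to prime factors of $H_K(j_E)$ that are non-split in $K$, and the use of a small prime $\ell$ with $X_0(\ell)$ of genus zero to control the arithmetic of the relevant discriminants---but it does not actually carry out the decisive step. The sentence ``I would invoke this modular-curve construction as the decisive ingredient'' is precisely where the real content of Elkies' proof lies: one must exhibit, given any finite set of primes, a specific discriminant $-D$ (built from the given supersingular primes together with the auxiliary $\ell$) such that $H_{-D}(j_E)$ is forced to have a \emph{new} prime factor that is inert or ramified in $\bbQ(\sqrt{-D})$. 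This requires a parity or sign argument on norms that your outline does not supply. Also, the Gross--Zagier height bound you mention is not what drives the argument; the infinitude comes from the combinatorial/sign control on the prime factorization, not from $|N_D|\to\infty$ alone. So as written, your proposal is an accurate table of contents for Elkies' paper rather than a proof, and in any case it goes well beyond what the present paper attempts, which is simply to quote the result.
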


\begin{remark}
In fact, Elkies~\cite{Elkies_number-field} obtained a similar result for every elliptic curve over an arbitrary number field $F$ (of finite degree) which has at least one field embedding $F\subset \bbR$.
\end{remark}

As the consequence of Theorem \ref{thm: Elkies} and Lemma \ref{lem: supersingular_equiv} $(1)\Leftrightarrow (2)$, we see that there exist infinitely many prime numbers $p\in\primes\setminus S_E$ for each of which $\mathcal{E}_{\bbF_p}$ is $\bbF_{p^m}$-maximal for some $m\in\bbN$. Therefore, we can determine the ceiling (resp.\ floor) Puiseux polynomial of an elliptic curve defined over $\bbQ$ as follows.

\begin{corollary}\label{cor: CPoly-AZF_elliptic-curve}
Let $E$ be any elliptic curve over $\bbQ$. Then, it holds that
\[\CPoly_E(t)=t+2t^{1/2}+1\quad\text{and}\quad \FPoly_{E}(t)= t-2t^{1/2}+1.\]
Moreover, the absolute zeta functions of $\CPoly_E$ and $\FPoly_E$ are
\[\zeta_{\CPoly_E}(s)=\frac{1}{s\left(s-\frac{1}{2}\right)^2(s-1)}\quad\text{and}\quad \zeta_{\FPoly_E}(s)=\frac{\left(s-\frac{1}{2}\right)^2}{s(s-1)},\]
respectively.
\end{corollary}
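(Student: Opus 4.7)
The plan is to combine Proposition~\ref{prop: ceiling-floor_curve} (for the shape of the Puiseux polynomial) with Elkies' theorem (to verify its hypothesis) and then to evaluate Soul\'e's limit (extended to Puiseux polynomials) in order to read off the absolute zeta functions.

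First, I would reduce the determination of $\CPoly_E$ and $\FPoly_E$ to a single model. By the spreading out principle, choose any elliptic scheme $\mathcal{E}$ over $\bbZ[S_E^{-1}]$ with $\mathcal{E}_\bbQ\cong E$ for some finite $S_E\subset\primes$. By Corollary~\ref{cor: single_model_is_enough}, it suffices to check that the Puiseux polynomials $t+2t^{1/2}+1$ and $t-2t^{1/2}+1$ are, respectively, the ceiling and the floor Puiseux polynomial of the sequence $(\#\mathcal{E}(\bbF_q))_{q\in\PrimePow_S}$ for all sufficiently large finite $S\supset S_E$.

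Second, I would identify the candidates via Proposition~\ref{prop: ceiling-floor_curve} with $g=1$. Its hypothesis requires the existence of infinitely many primes $p\notin S_E$ for which $\mathcal{E}_{\bbF_p}$ is $\bbF_{p^m}$-maximal (resp.\ $\bbF_{p^m}$-minimal) for some $m\in\bbN$. To verify both hypotheses simultaneously, invoke Elkies' theorem (Theorem~\ref{thm: Elkies}): $E$ has infinitely many primes of good supersingular reduction. For each such prime $p\notin S_E\cup\{2,3\}$, Lemma~\ref{lem: supersingular_equiv}~$(1)\Leftrightarrow(2)$ (with $k=1$) gives $\#\mathcal{E}(\bbF_{p^2})=p^2+2p+1$ and $\#\mathcal{E}(\bbF_{p^4})=p^4-2p^2+1$, so $\mathcal{E}_{\bbF_p}$ is both $\bbF_{p^2}$-maximal and $\bbF_{p^4}$-minimal. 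This supplies the infinite supply required by Proposition~\ref{prop: ceiling-floor_curve} on both sides. The third condition of Lemma~\ref{lem: existence_ceiling-floor-Puiseux} is immediate: $\CPoly_E(1)=4$ and $\FPoly_E(1)=0$ both lie in $\bbZ$.

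Third, I would compute the absolute zeta functions by extending Soul\'e's formula (\ref{eq: Soule-limit}) from polynomials to Puiseux polynomials. For $f(t)=\sum_j a_j t^{\rho_j}$ with $\rho_j\in\bbQ_{\geq 0}$, the standard identity $\sum_{m\geq 1}x^m/m=-\log(1-x)$ gives
\[
\exp\!\left(\sum_{m=1}^{\infty}\frac{f(p^m)}{m}p^{-ms}\right)=\prod_j (1-p^{\rho_j-s})^{-a_j},
\]
and the asymptotic $1-p^{\rho_j-s}\sim (s-\rho_j)(p-1)$ as $p\to 1^+$, together with $f(1)=\sum_j a_j$, yields $\zeta_f(s)=\prod_j (s-\rho_j)^{-a_j}$. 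Applying this to $\CPoly_E=t+2t^{1/2}+1$ and $\FPoly_E=t-2t^{1/2}+1$ gives the two claimed rational functions directly.

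The only nontrivial ingredient is Elkies' infinitude of supersingular primes, which is cited as Theorem~\ref{thm: Elkies}; once it is in hand, all remaining steps are short bookkeeping using the uniqueness lemmas and the Soul\'e-type limit. Thus the main expository task is simply to assemble the reductions in the correct order so that Proposition~\ref{prop: ceiling-floor_curve} becomes applicable to an arbitrary $\mathcal{E}$.
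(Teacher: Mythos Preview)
Your proposal is correct and follows essentially the same route as the paper: the paper likewise derives the corollary by combining Theorem~\ref{thm: Elkies} with Lemma~\ref{lem: supersingular_equiv} $(1)\Leftrightarrow(2)$ to supply the infinitely many maximal/minimal reductions needed in Proposition~\ref{prop: ceiling-floor_curve}, and then reads off the absolute zeta functions from the Soul\'e-type formula~(\ref{eq: Soule-limit}) extended to Puiseux polynomials (cf.\ Remark~\ref{rem: another_def_of_AZF}). Your explicit mention of Corollary~\ref{cor: single_model_is_enough} and of condition~(3) in Lemma~\ref{lem: existence_ceiling-floor-Puiseux} just makes transparent what the paper has already folded into the statement of Proposition~\ref{prop: ceiling-floor_curve}.
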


\begin{remark}
If $X$ is a monoid scheme of finite type whose $\bbZ$-lift is a smooth projective variety, then Deitmar, Koyama and Kurokawa deduced the equality
\[\#X(\fun)=N_{X_\bbZ}(1)=\chi_{\mathrm{top}}(X_\bbZ(\bbC))\]
from the Weil conjecture for $X_{\bbF_p}$ (cf.\ the proof of \cite{Deitmar-Koyama-Kurokawa_2015}*{Theorem 2.1}). In fact, we could formally obtain the similar equation \[``\#\mathcal{E}(\fun)"=0=\chi_{\mathrm{top}}(\mathcal{E}(\bbC))\]
if we substituted $m=0$ in the equation (\ref{eq: trace-formula}) in the proof of Lemma \ref{lem: supersingular_equiv}, which is the consequence of the Weil conjecture for $\mathcal{E}_{\bbF_p}$.
Moreover, the Puiseux polynomial $\FPoly_E$ satisfies that \[\FPoly_E(1)=\chi_{\mathrm{top}}(E(\bbC))=\chi_{\mathrm{top}}(S^{1} \times S^{1}).\]
These observations are all consistent with the philosophy that the number of ``the $\fun$-rational points'' of a scheme and the value at $1$ of the original function $f$ of the absolute zeta function $\zeta_f$ associated with it coincide with its Euler characteristic (cf.\ \cite{soule2004}*{Th\'eor\`eme 2}, \cite{kurokawa2005}*{Remark 2}, \cite{Deitmar2006}*{p.~141}).
On the other hand, the Puiseux polynomial $\CPoly_E$ is not consistent with the above philosophy.
In this view, it is fair to say that $\zeta_{\FPoly_E}$ is better than $\zeta_{\CPoly_E}$.
\end{remark}

\begin{remark}
According to \cite{Charles_pair_of_elliptic-curve}, for any pair of elliptic curves $E_1$, $E_2$ over a number filed $K$, there are infinitely many prime ideals of $K$ at which the reductions of $E_1$ and $E_2$ are geometrically isogenous. This might suggest that all elliptic curves over $K$ are ``geometrically isogenous over $\fun$" in some sense. On the other hand, if $K=\bbQ$, then Corollary \ref{cor: CPoly-AZF_elliptic-curve} shows that both $\CPoly_E$ and $\FPoly_E$ are determined purely in terms of the Betti numbers of the topological 2-dimensional torus $S^{1} \times S^{1}$. In particular, they are independent of the isogeny class of $E$. This might even suggest that all elliptic curves over $\bbQ$ are ``isogenous over $\fun$" at least in view of Tate's isogeny theorem over $\bbF_p$ (see e.g.\ \cite{arithmetic_elliptic-curve}*{III.7.7}).
\end{remark}

\appendix

%
%
%
%
%
\section{Ceiling/Floor Puiseux polynomial of an elliptic curve in the case of $\mathcal{P}=\primes\setminus S$}
%
%
%
%
%

Let $S$ be a finite subset of $\primes$. In this appendix, we discuss the ceiling and floor Puiseux polynomials of the sequence $\left(\#\mathcal{E}(\bbF_p)\right)_{p\in\primes\setminus S}$ instead of the sequence $\left(\#\mathcal{E}(\bbF_q)\right)_{q\in\PrimePow_S}$ in \S\ref{section: Q-scheme}. As a result, in the case of elliptic curves defined over $\bbQ$ with complex multiplication, we obtain the same Puiseux polynomial as its ceiling and floor Puiseux polynomial.

\begin{definition}
Let $X$ be a separated scheme of finite type over $\bbQ$. Assume that there exists a Puiseux polynomial $f$ satisfying the following condition: for any separated scheme $\mathcal{X}$ of finite type over $\bbZ$ satisfying that $\mathcal{X}_\bbQ\cong X$, there exists a finite subset $S_{\mathcal{X}}$ of $\primes$ such that for any finite subset $S$ of $\primes$ containing $S_{\mathcal{X}}$, the Puiseux polynomial $f$ is the ceiling (resp.\ floor) Puiseux polynomial of $\left(\#\mathcal{X}(\bbF_p)\right)_{p\in\primes\setminus S}$. Then, we call $f$ the \emph{prime ceiling} (resp.\ \emph{floor}) \emph{Puiseux polynomial} of $X$ and denote it by $\CPoly_X'$ (resp.\ $\FPoly_X'$).
\end{definition}

\begin{remark}
Comparing it with Definition \ref{def: ceiling-Puiseux_Q-sch}, the first condition in Lemma \ref{lem: existence_ceiling-floor-Puiseux} gets weaker and the second one gets stronger for $\boldsymbol{A}=\left(\#\mathcal{X}(\bbF_p)\right)_{p\in\primes\setminus S}$ than for $\boldsymbol{A}=\left(\#\mathcal{X}(\bbF_q)\right)_{q\in\PrimePow_S}$.
\end{remark}

Let $E$ be an elliptic curve defined over $\bbQ$. As mentioned in \S\ref{subsection: elliptic_curve}, there exist a finite subset $S_{E}$ of $\primes$ and an elliptic curve $\mathcal{E}$ over $\bbZ[S_{E}^{-1}]$ such that $\mathcal{E}_{\bbQ} \cong E$. Then, for $p\in\primes\setminus S_{E}$, the Hasse bound implies that
\[
	p+1-2\sqrt{p} < \#\mathcal{E}(\bbF_{p}) < p+1+2\sqrt{p}.
\]
Then, $p$ is called a \emph{champion} (resp.\ \emph{trailing}) prime if the equality \[\#\mathcal{E}(\bbF_{p}) = p+1+\lfloor 2\sqrt{p}\rfloor \quad\left(\text{resp.}\ \#\mathcal{E}(\bbF_{p}) = p+1- \lceil 2\sqrt{p} \rceil\right)\] holds~\cite{James-Pollack}.
Let $\pi_{E}^{+}$ (resp.\ $\pi_{E}^{-}$) be the set of champion (resp.\ trailing) prime numbers for $E$ and $\pi_{E}^{\pm}(x) := \pi_{E}^{\pm} \cap (0,x]$ for every $x \in (0,\infty)$. Then, the following is obvious:

\begin{proposition}[cf.~Proposition \ref{prop: ceiling-floor_curve}]\label{prop: elliptic-curve_prime}
Assume that $\#\pi_{E}^{\pm}=\infty$, then it holds that $\CPoly'_{E}=\CPoly_{E}$ and $\FPoly'_{E}=\FPoly_{E}$.
\end{proposition}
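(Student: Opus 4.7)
The plan is to verify directly that the Puiseux polynomials $\CPoly_{E}(t)=t+2t^{1/2}+1$ and $\FPoly_{E}(t)=t-2t^{1/2}+1$ identified in Corollary \ref{cor: CPoly-AZF_elliptic-curve} also satisfy the three defining conditions of Lemma \ref{lem: existence_ceiling-floor-Puiseux} when the underlying sequence is $(\#\mathcal{E}(\bbF_{p}))_{p\in\primes\setminus S}$ rather than $(\#\mathcal{E}(\bbF_{q}))_{q\in\PrimePow_{S}}$. The uniqueness clause of that lemma will then force the asserted equalities $\CPoly'_{E}=\CPoly_{E}$ and $\FPoly'_{E}=\FPoly_{E}$.

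First I would reduce to a single model. Fix an elliptic model $\mathcal{E}$ of $E$ over $\bbZ[S_{E}^{-1}]$. For an arbitrary separated model $\mathcal{X}/\bbZ$ with $\mathcal{X}_{\bbQ}\cong E$, Corollary \ref{cor: single_model_is_enough} furnishes a finite subset $\Sigma'\subset\primes$ such that $\#\mathcal{X}(\bbF_{p})=\#\mathcal{E}(\bbF_{p})$ for every $p\in\primes\setminus\Sigma'$ (since $\primes\setminus\Sigma'\subset\PrimePow_{\Sigma'}$). Setting $S_{\mathcal{X}}:=\Sigma'\cup S_{E}$, it is then enough to check the three conditions of Lemma \ref{lem: existence_ceiling-floor-Puiseux} for the sequence $(\#\mathcal{E}(\bbF_{p}))_{p\in\primes\setminus S}$ with an arbitrary $S\supset S_{\mathcal{X}}$.

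For $f:=\CPoly_{E}$, condition (1) is precisely the Hasse upper bound $\#\mathcal{E}(\bbF_{p})\leq p+1+2\sqrt{p}=f(p)$, and condition (3) is the trivial equality $f(1)=4\in\bbZ$. For condition (2), the key remark is that $\sqrt{p}$, and hence $2\sqrt{p}$, is irrational for every prime $p$, so $\lfloor f(p)\rfloor=p+1+\lfloor 2\sqrt{p}\rfloor$; consequently $\lfloor f(p)\rfloor=\#\mathcal{E}(\bbF_{p})$ is equivalent to $p\in\pi_{E}^{+}$, and the hypothesis $\#\pi_{E}^{+}=\infty$ supplies infinitely many such $p$ outside any finite $S$. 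The floor case for $f:=\FPoly_{E}$ is strictly parallel: $f(1)=0$, the Hasse lower bound yields condition (1), and the irrationality of $2\sqrt{p}$ gives $\lceil f(p)\rceil=p+1-\lfloor 2\sqrt{p}\rfloor$, the minimum integer value allowed by Hasse, attained precisely at each $p\in\pi_{E}^{-}$ and hence at infinitely many $p$ by hypothesis.

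There is no serious obstacle in this argument: the entire content is the observation that, because $2\sqrt{p}$ is irrational at every prime, the floor (resp.\ ceiling) of $\CPoly_{E}(p)$ (resp.\ $\FPoly_{E}(p)$) coincides with the extremal integer value permitted by the Hasse bound, while the hypothesis $\#\pi_{E}^{\pm}=\infty$ is exactly what guarantees that those extremal values are actually attained infinitely often among the primes.
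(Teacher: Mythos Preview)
Your argument is correct and is precisely the routine verification the paper intends; the paper itself gives no proof beyond the phrase ``the following is obvious'' together with the cross-reference to Proposition \ref{prop: ceiling-floor_curve}. One small remark: in writing $\lceil\FPoly_{E}(p)\rceil=p+1-\lfloor 2\sqrt{p}\rfloor$ and identifying this with the trailing-prime condition, you are implicitly using the James--Pollack convention $\#\mathcal{E}(\bbF_{p})=p+1-\lfloor 2\sqrt{p}\rfloor$ rather than the printed $p+1-\lceil 2\sqrt{p}\rceil$; this is surely what is meant, since with the literal $\lceil\cdot\rceil$ the strict Hasse inequality would force $\pi_{E}^{-}=\emptyset$.
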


For a CM elliptic curve over $\bbQ$, the following fact on $\pi_{E}^{\pm}(x)$ is known.

\begin{theorem} [{\cite{James-Pollack}*{Theorem 1}}] \label{thm: James-Pollack}
Suppose that $E$ has complex multiplication over $\overline{\bbQ}$. Then, the following asymptotic relation holds:
\[
	\pi_{E}^{\pm}(x) \sim \frac{2}{3\pi} \cdot \frac{x^{3/4}}{\log x}
	\quad (x \to \infty).
\]
In particular, it holds that $\#\pi_{E}^{\pm}=\infty$.
\end{theorem}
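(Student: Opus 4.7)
The plan is to reduce the statement to a counting problem for prime ideals of the CM field $K$ of $E$ and then invoke an equidistribution theorem of Hecke combined with a quantitative prime ideal theorem in thin sectors.

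First, choose an embedding of the endomorphism algebra $\text{End}_{\overline{\bbQ}}(E)\otimes\bbQ$ into an imaginary quadratic field $K\subset\bbC$; then $E$ corresponds to a Hecke Gr\"ossencharacter $\psi$ of $K$ such that, for every prime $p$ of good reduction that splits in $K$ as $p=\mathfrak{p}\bar{\mathfrak{p}}$, one has $a_p=2\,\text{Re}(\alpha)$ with $\alpha:=\psi(\mathfrak{p})\in\mathcal{O}_K$ and $N(\alpha)=p$; inert primes give $a_p=0$, which is never extreme. Setting $m:=\lfloor 2\sqrt{p}\rfloor$, the champion condition $a_p=-\lfloor 2\sqrt{p}\rfloor$ translates into the requirement that $\alpha$ lie on the vertical line $\text{Re}(\alpha)=-m/2$ and satisfy $|\text{Im}(\alpha)|<\sqrt{(2m+1)/4}$, so that automatically $N(\alpha)\in[m^2/4,(m+1)^2/4)$; the trailing case is handled symmetrically about the positive real axis.

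For each positive integer $m$, this locus is a vertical segment of length $\sqrt{2m+1}$ sitting at distance $\approx m/2$ from the origin. Viewed in polar coordinates, it occupies an angular sector of width $\asymp m^{-1/2}$ about the negative real axis, intersected with the annulus of norms in $[m^2/4,(m+1)^2/4)$. A suitably uniform form of Hecke's equidistribution theorem then gives that the count of prime ideals of $\mathcal{O}_K$ in this thin region is asymptotic to $\frac{1}{2\pi\log(m^2/4)}\cdot(\text{angular width})\cdot(\text{norm width})\asymp\sqrt{m}/\log m$. Each split rational prime $p$ corresponds to two conjugate prime ideals, so dividing by $2$ and summing over $1\leq m\leq 2\sqrt{x}$ via $\int_1^{2\sqrt{x}}\sqrt{m}/\log m\,dm\sim(4/3)\cdot x^{3/4}/\log x$, the various geometric constants combine (after a short calculation) to yield $\pi_E^{\pm}(x)\sim(2/(3\pi))\cdot x^{3/4}/\log x$.

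The main obstacle will be to supply the uniform version of Hecke's equidistribution theorem required above: the sectors in question have angular width as small as $\sim x^{-1/4}$ when $m$ is close to $2\sqrt{x}$, so a purely qualitative statement is insufficient. Rigorously, one needs standard zero-free regions (or at least quantitative zero-density estimates) for the Hecke $L$-functions attached to the positive integer powers of $\psi$, combined with a Perron-type contour argument and a dyadic decomposition in $m$ to prevent logarithmic losses when assembling the contributions from different $m$. The contributions from the finitely many primes of bad reduction and from inert primes are trivially negligible (at worst $O(\log x)$), so they do not affect the main asymptotic.
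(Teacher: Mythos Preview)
The paper gives no proof of this statement: Theorem~\ref{thm: James-Pollack} is quoted directly from \cite{James-Pollack} and used as a black box in the appendix, so there is nothing in the present paper to compare your proposal against.

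For what it is worth, your outline is essentially the strategy carried out in \cite{James-Pollack}. The one place where the sketch is loose is the passage from ``prime ideals on the vertical segment $\Re(\alpha)=-m/2$'' to ``prime ideals in a sector $\times$ annulus'': the former is a one-dimensional locus while the latter is a two-dimensional region, and the identification only works because the horizontal extent of that sector--annulus is $O(1)$ and the lattice $\mathcal{O}_K$ has bounded covolume, forcing at most a bounded number of admissible values of $\Re(\alpha)$; this is also where the parity and congruence constraints on $m$ coming from the specific ring $\mathcal{O}_K$ (and from the conductor of $\psi$) enter and must be tracked through to recover the constant $2/(3\pi)$. You have correctly isolated the genuine analytic input---equidistribution of prime ideals in sectors shrinking like $x^{-1/4}$, controlled via zero-free regions for the Hecke $L$-functions attached to powers of $\psi$---which is precisely where the work in \cite{James-Pollack} lies.
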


According to Theorem \ref{thm: James-Pollack}, the prime ceiling (resp.\ floor) Puiseux polynomial of a CM elliptic curve coincides with the Puiseux polynomial in Proposition \ref{prop: elliptic-curve_prime}. On the other hand, for an elliptic curve defined over $\bbQ$ without complex multiplication, it is conjectured in \cite{James-Tran-Trinh-Wertheimer-Zantout}*{Conjecture 2.3} that
\[
	\pi_{E}^{\pm}(x) \sim c_{E} \cdot \frac{x^{1/4}}{\log x}
	\quad (x \to \infty),
\]
where $c_{E} \in (0,\infty)$ is a constant.
Currently, the above estimate of $\pi_{E}^{\pm}(x)$ in the case where $E$ is a non-CM elliptic curve is verified only under some assumptions such as the Generalized Riemann Hypothesis (cf.\ \cite{extremal-prime_non-CM}).

\subsection*{Acknowledgement}

The authors would like to thank Professors Kenichi Bannai, Takeshi Katsura, Masato Kurihara and Taka-aki Tanaka for carefully checking the manuscript of this article and giving many helpful comments. The authors also thank the referee for valuable comments concerning the manuscript.

%
%
%
%
%
%
%
%
%
%

\bibliography{ceiling_floor}

\end{document}